\theoremstyle{definition}
\newtheorem{definition}{Definition}[section]
\newtheorem{notation}[definition]{Notation}
\newtheorem{example}[definition]{Example}
\theoremstyle{plain}
\newtheorem{theorem}[definition]{Theorem}
\newtheorem{lemma}[definition]{Lemma}
\newtheorem{proposition}[definition]{Proposition}
\newtheorem{corollary}[definition]{Corollary}
\newcommand{\beq}{\begin{equation}}
\newcommand{\eeq}{\end{equation}}
\newcommand{\bdfn}{\begin{definition}}
\newcommand{\edfn}{\end{definition}}
\newcommand{\bthm}{\begin{theorem}}
\newcommand{\ethm}{\end{theorem}}
\newcommand{\bprop}{\begin{proposition}}
\newcommand{\eprop}{\end{proposition}}
\newcommand{\bcor}{\begin{corollary}}
\newcommand{\ecor}{\end{corollary}}
\newcommand{\blem}{\begin{lemma}}
\newcommand{\elem}{\end{lemma}}
\newcommand{\bex}{\begin{example}}
\newcommand{\eex}{\end{example}}
\newcommand{\bxc}{\begin{exercise}}
\newcommand{\exc}{\end{exercise}}
\newcommand{\bntn}{\begin{notation}}
\newcommand{\entn}{\end{notation}}
\newcommand{\be}{\begin{enumerate}}
\newcommand{\ee}{\end{enumerate}}
\newcommand{\bce}{\begin{center}}
\newcommand{\ece}{\end{center}}
\newcommand{\bi}{\begin{itemize}}
\newcommand{\ei}{\end{itemize}}
\newcommand{\bt}{\begin{tabular}}
\newcommand{\et}{\end{tabular}}
\newcommand{\Ra}{\Rightarrow}
\newcommand{\si}{\wedge}
\newcommand{\sau}{\vee}
\newcommand{\ba}{\begin{array}} 
\newcommand{\ea}{\end{array}}
\begin{document}

\title{Dense Elements and Classes of Residuated Lattices}
\author{Claudia MURE\c{S}AN}
\maketitle

\begin{center}
{\em Dedicated to my beloved grandparents, Floar\u{a}-Marioara Mure\c{s}an, Elena Mircea and Ion Mircea}
\end{center}

\begin{abstract}
In this paper we study the dense elements and the radical of a residuated lattice, residuated lattices with lifting Boolean center, simple, local, semilocal and quasi-local residuated lattices. BL-algebras have lifting Boolean center; moreover, Glivenko residuated lattices which fulfill the equation $(\neg \, a\rightarrow \neg \, b)\rightarrow \neg \, b=(\neg \, b\rightarrow \neg \, a)\rightarrow \neg \, a$ have lifting Boolean center.

\vspace*{12pt} 
\noindent AMS {\bf Subject Classification:} Primary 06F35, Secondary 03G10.

\vspace*{12pt} 
\noindent {\bf Keywords:} Residuated lattices, maximal filters, dense elements, lif\-ting Boolean center.
\end{abstract}

\section{Introduction}

\hspace*{11pt} In this paper we study properties of residuated lattices related to the dense elements, the radical, the lifting Boolean center, as well as several classes of residuated lattices.

In Section \ref{prelim1} of the article we recall some definitions and facts about residuated lattices that we use in the sequel: rules of calculus, de\-fi\-ni\-tions of several classes of residuated lattices, definitions of the regular elements, the Boolean center, the dense elements and the radical of a re\-si\-du\-a\-ted lattice, definitions of filters and spectral topologies and first properties of the defined notions.

In Section \ref{denselem} we study the dense elements and the radical of a residuated lattice. We make this study for arbitrary residuated lattices, as well as for certain classes of residuated lattices.

In Section \ref{liftprop} we study the property of lifting Boolean center for residuated lattices and indicate certain classes of residuated lattices which have lifting Boolean center.

In Section \ref{locsemiloc} we study local, semilocal and simple residuated lattices and we define and study quasi-local residuated lattices. We also show the relations between these classes of residuated lattices.

\section{Preliminaries}
\label{prelim1}

\hspace*{11pt} A {\em (commutative) residuated lattice} is an algebraic structure $(A,\vee ,\wedge ,\odot ,$\linebreak $\rightarrow ,0,1)$, with the first 4 operations binary and the last two constant, such that $(A,\vee ,\wedge ,0,1)$ is a bounded lattice, $(A,\odot ,1)$ is a commutative monoid and the following property, called {\em residuation}, is satisfied: for all $a,b,c\in A$, $a\leq b\rightarrow c\Leftrightarrow a\odot b\leq c$, where $\leq $ is the partial order of the lattice $(A,\vee ,\wedge ,0,1)$.

In the following, unless mentioned otherwise, let $A$ be a residuated lattice. 

We define two additional operations on $A$: for all $a\in A$, we denote $\neg \,a=a\rightarrow 0$, and, for all $a,b\in A$, we denote $a\leftrightarrow b=(a\rightarrow b)\wedge (b\rightarrow a)$.

Let $a\in A$ and $n\in {\rm I\! N}^{*}$. We shall denote by $a^{n}$ the following element of $A$: $\underbrace{a\odot \ldots \odot a}_{n\ {\rm of}\ a}$. We also denote $a^{0}=1$.

Let $a\in A$. The {\em order} of $a$, in symbols $ord(a)$, is the smallest $n\in {\rm I\! N}^{*}$ such that $a^{n}=0$. If no such $n$ exists, then $ord(a)=\infty $.

\begin{lemma}{\rm \cite{pic}} Let $a,b,c,d\in A$. Then:

\noindent (i) $a\leq b$ iff $a\rightarrow b=1$; 

\noindent (ii) $a\leq \neg \, \neg \, a$;

\noindent (iii) $\neg \, \neg \, \neg \, a=\neg \, a$;

\noindent (iv) $\neg \, 0=1$, $\neg \, 1=0$;

\noindent (v) if $a\leq b$ and $c\leq d$ then $a\odot c\leq b\odot d$.
\label{`leqres`}
\end{lemma}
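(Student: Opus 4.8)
The entire lemma follows by systematic use of the residuation adjunction $x\odot y\leq z\Leftrightarrow x\leq y\rightarrow z$, together with the facts that $1$ is at once the top element of the lattice and the unit of $\odot$, that $0$ is the bottom element, and that $\odot$ is commutative. The plan is to treat the items roughly in the order (i), (v), (iv), (ii), (iii), since the argument for (iii) will depend on the earlier ones.

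For (i), from $a\leq b$ one wants $a\rightarrow b=1$, i.e.\ $1\leq a\rightarrow b$; by residuation this is equivalent to $1\odot a\leq b$, and since $1\odot a=a\odot 1=a$ this is exactly the hypothesis. Conversely, if $a\rightarrow b=1$ then $1\leq a\rightarrow b$, so $a=1\odot a\leq b$ by residuation. For (v), I would first obtain $a\odot c\leq b\odot c$ from $a\leq b$: indeed $b\leq c\rightarrow(b\odot c)$ because, by residuation, this just says $b\odot c\leq b\odot c$; hence $a\leq c\rightarrow(b\odot c)$ by transitivity, which by residuation is $a\odot c\leq b\odot c$. Running the same argument in the second coordinate (using commutativity of $\odot$) gives $b\odot c\leq b\odot d$, and transitivity of $\leq$ yields $a\odot c\leq b\odot d$.

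Item (iv) is then immediate: $\neg\, 0=0\rightarrow 0=1$ by (i) applied to $0\leq 0$, while for $\neg\, 1=1\rightarrow 0$ residuation gives $1\rightarrow 0=1\odot(1\rightarrow 0)\leq 0$, so $\neg\, 1=0$ since $0$ is the bottom element. For (ii), residuation turns the desired $a\leq(a\rightarrow 0)\rightarrow 0=\neg\,\neg\, a$ into $a\odot(a\rightarrow 0)\leq 0$, and, applying commutativity and residuation once more, this is just the trivially true $a\rightarrow 0\leq a\rightarrow 0$.

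For (iii) I would record the auxiliary fact that $\neg$ is order-reversing: if $x\leq y$, then by (v) we have $(y\rightarrow 0)\odot x\leq(y\rightarrow 0)\odot y\leq 0$ (the last step by residuation), so $\neg\, y=(y\rightarrow 0)\leq(x\rightarrow 0)=\neg\, x$, again by residuation. Now (ii) applied to $\neg\, a$ gives $\neg\, a\leq\neg\,\neg\,\neg\, a$; and applying the order-reversing operation $\neg$ to the inequality $a\leq\neg\,\neg\, a$ from (ii) gives $\neg\,\neg\,\neg\, a\leq\neg\, a$. The two inequalities give the equality in (iii). I do not expect any genuine obstacle: each item reduces to one or two uses of the adjunction, and the only thing to be careful about is the ordering, since the proof of (iii) uses both (ii) and the monotonicity of $\odot$ from (v).
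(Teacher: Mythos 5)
Your proof is correct: each step is a legitimate application of the residuation adjunction $a\leq b\rightarrow c\Leftrightarrow a\odot b\leq c$ together with commutativity and the unit law for $\odot$, and the ordering of the items (proving (v) and the antitonicity of $\neg$ before (iii)) is exactly what is needed. The paper itself gives no proof — it quotes the lemma from \cite{pic} — and your argument is the standard one found there, so there is nothing to contrast.
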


An element $a$ of $A$ is said to be {\em regular} iff $\neg \, \neg \, a=a$. The set of all regular elements of $A$ is denoted ${\rm Reg}(A)$. $A$ is said to be {\em involutive} iff it verifies: for all $a\in A$, $\neg \, \neg \, a=a$, that is: ${\rm Reg}(A)=A$.

An {\em MTL-algebra} is a residuated lattice $A$ that satisfies {\em the preliniarity equation}: for all $a,b\in A$, $(a\rightarrow b)\vee (b\rightarrow a)=1$. An involutive MTL-algebra is called an {\em IMTL-algebra}. A {\em BL-algebra} is an MTL-algebra $A$ that satisfies {\em the divisibility equation}: for all $a,b\in A$, $a\wedge b=a\odot (a\rightarrow b)$. 

An {\em MV-algebra} is an algebra $(A,\oplus,\neg \, ,0)$ with one binary 
operation $\oplus$, one unary operation $\neg \, $ and one constant 0 such that: $(A,\oplus,0)$ is a commutative monoid and, for all $a,b\in A$, $\neg \, \neg \, a=a$, $a\oplus \neg \, 0=\neg \, 0$, $\neg \, (\neg \, a\oplus b)\oplus b=\neg \, (\neg \, b\oplus a)\oplus a$. If $A$ is an MV-algebra, then the binary operations $\odot$, $\si$, $\sau$, $\rightarrow $ and the constant 1 are defined by the following relations: for all $a,b\in A$, $a\odot b=\neg \, (\neg \, a\oplus \neg \, b)$, $a\si b=(a\oplus \neg \, b)\odot b$ , $a\sau b=(a\odot \neg \, b)\oplus b$, $a\rightarrow b=\neg\, a\oplus b$, $1=\neg \, 0$. According to \cite[Theorem 3.2, page 99]{pic}, MV-algebras are exactly the involutive BL-algebras. For a detailed exposition of MV-algebras see \cite{cdom}. 

Again, in what follows, unless mentioned otherwise, let $A$ be a residuated lattice.

We denote by $B(A)$ the {\em Boolean center of $A$}, that is the set of all complemented elements of the lattice $(A,\vee ,\wedge ,0,1)$. By \cite[Lemma 1.12]{pic}, the complements of the elements in the Boolean center of a residuated lattice are unique. For any element $e$ from the Boolean center of a residuated lattice, we denote by $e^{\prime }$ its complement. By \cite[Lemma 1.13]{pic}, any element $e$ from the Boolean center of a residuated lattice satisfies: $e^{\prime }=\neg \, e$.

\bprop \label{car-B(A)}{\rm \cite[Proposition 1.17, page 15]{pic}} For any $e\in B(A)$, we have: $e\odot e=e$ and $e=\neg \, \neg \, e$.
\eprop

\begin{proposition}{\rm \cite[Proposition 1.16, page 14]{pic}} $e\in B(A)$ iff $e\vee \neg \, e=1$.
\label{B(A)=}
\end{proposition}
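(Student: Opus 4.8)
The plan is to verify both implications with $\neg \, e$ as the only candidate for the lattice complement of $e$.

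For the implication from left to right, suppose $e \in B(A)$. By definition $e$ has a lattice complement $e'$, which is unique by \cite[Lemma 1.12]{pic} and which equals $\neg \, e$ according to \cite[Lemma 1.13]{pic} (quoted above). Since $e \vee e' = 1$ by the definition of a complement, we obtain $e \vee \neg \, e = 1$ at once.

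For the converse, assume $e \vee \neg \, e = 1$. I would prove that $\neg \, e$ is a complement of $e$; as $e \vee \neg \, e = 1$ is the hypothesis, it remains only to show $e \wedge \neg \, e = 0$. Put $d = e \wedge \neg \, e$. Residuation applied to $\neg \, e = e \rightarrow 0$ yields $e \odot \neg \, e \leq 0$, i.e. $e \odot \neg \, e = 0$. From $d \leq e$ and $d \leq \neg \, e$, Lemma \ref{`leqres`}(v) gives $d \odot e \leq \neg \, e \odot e = 0$ and $d \odot \neg \, e \leq e \odot \neg \, e = 0$. Using that $\odot$ distributes over $\vee$ and that $1$ is the monoid identity, $d = d \odot 1 = d \odot (e \vee \neg \, e) = (d \odot e) \vee (d \odot \neg \, e) = 0$. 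Hence $\neg \, e$ is a complement of $e$ and $e \in B(A)$.

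The only delicate point is the distributivity step $d \odot (e \vee \neg \, e) = (d \odot e) \vee (d \odot \neg \, e)$: the inequality $\geq$ is just monotonicity (Lemma \ref{`leqres`}(v)), while $d \odot (x \vee y) \leq (d \odot x) \vee (d \odot y)$ follows from residuation, since $x \leq d \rightarrow ((d \odot x) \vee (d \odot y))$ and likewise for $y$, whence $x \vee y \leq d \rightarrow ((d \odot x) \vee (d \odot y))$. So there is no real obstacle; everything reduces to routine use of residuation, monotonicity and the monoid law.
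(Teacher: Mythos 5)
Your proof is correct; the paper itself gives no argument for this proposition (it is quoted from \cite[Proposition 1.16]{pic}), and your reasoning is the standard one: the forward direction follows from the quoted facts that the complement is unique and equals $\neg\, e$, and the converse reduces to showing $e\wedge\neg\, e=0$ via $e\odot\neg\, e=0$ and the distributivity of $\odot$ over $\vee$, which you correctly derive from residuation. No gaps.
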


By \cite[Corollary 1.15]{pic}, the Boolean center of $A$, with the operations induced by those of $A$, is a Boolean algebra.

Let $A_{1}$, $A_{2}$ be residuated lattices and $f:A_{1}\rightarrow A_{2}$ a morphism of residuated lattices. We denote by $B(f):B(A_{1})\rightarrow B(A_{2})$ the restriction of $f$ to $B(A_{1})$. It is known and immediate that $B(f)$ is well defined and it is a morphism of Boolean algebras, hence $B$ is a covariant functor between the category of residuated lattices and the category of Boolean algebras.

A nonempty subset $F$ of $A$ is called a {\em filter of $A$} iff it satisfies the following conditions:

\noindent (i) for all $a,b\in F$, $a\odot b\in F$;

\noindent (ii) for all $a\in F$ and all $b\in A$, if $a\leq b$ then $b\in F$. 

The set of all filters of $A$ is denoted ${\cal{F}}(A)$. A filter $F$ of $A$ is said to be {\em proper} iff $F\neq A$.

A subset $F$ of $A$ is called a {\em deductive system of $A$} iff it satisfies the following conditions:

\noindent (i) $1\in F$;

\noindent (ii) for all $a,b\in A$, if $a,a\rightarrow b\in F$ then $b\in F$.

By \cite[Remark 1.11, page 19]{pic}, a nonempty subset $F$ of $A$ is a filter iff it is a deductive system. 

A proper filter $P$ of $A$ is called a {\em prime filter} iff, for all $a,b\in A$, if $a\vee b\in P$, then $a\in P$ or $b\in P$. The set of all prime filters of $A$ is called {\em the (prime) spectrum of $A$} and is denoted ${\rm Spec}(A)$.

For any $X\subseteq A$, we shall denote $S(X)=\{P\in {\rm Spec}(A)|X\subseteq \!\!\!\!\!\!/\ P\}$. For any $a\in A$, $S(\{a\})$ will be denoted $S(a)$. The family $\{S(X)|X\subseteq A\}$ is a to\-po\-lo\-gy on ${\rm Spec}(A)$, having the basis $\{S(a)|a\in A\}$ (\cite[Proposition 2.1]{eu2}). The family $\{S(X)|X\subseteq A\}$ is called {\em the Stone topology of $A$}.

A filter $M$ of $A$ is called a {\em maximal filter} iff it is a maximal element of the set of all proper filters of $A$. The set of all maximal filters of $A$ is called {\em the maximal spectrum of $A$} and is denoted ${\rm Max}(A)$.

For any $X\subseteq A$, we shall denote $S_{\rm Max}(X)=\{M\in {\rm Max}(A)|X\subseteq \!\!\!\!\!\!/\ M\}$. For any $a\in A$, $S_{\rm Max}(\{a\})$ will be denoted $S_{\rm Max}(a)$. It is a known fact that any maximal filter of a residuated lattice is a prime filter. It follows that the family $\{S_{\rm Max}(X)|X\subseteq A\}$ is a to\-po\-lo\-gy on ${\rm Max}(A)$, having the basis $\{S_{\rm Max}(a)|a\in A\}$. This is the topology induced on ${\rm Max}(A)$ by the Stone topology.





Let $F$ be a filter of $A$. For all $a,b\in A$, we denote $a\equiv b({\rm mod}\ F)$ and say that {\em $a$ and $b$ are congruent modulo $F$} iff $a\leftrightarrow b\in F$. $\equiv ({\rm mod}\ F)$ is a congruence relation on $A$. The quotient residuated lattice with respect to the congruence relation $\equiv ({\rm mod}\ F)$ is denoted $A/F$ and its elements are denoted $a/F$, $a\in A$. A consequence of Lemma \ref{`leqres`}, (i), is that, for all $a,b\in A$, if $a\leq b$, then $a/F\leq b/F$.

$0,1\in {\rm Reg}(A)$. By \cite{cito1}, if $a,b\in A$, then: $\neg \, \neg \, (\neg \, \neg \, a\rightarrow \neg \, \neg \, b)=\neg \, \neg \, a\rightarrow \neg \, \neg \, b$. It follows that ${\rm Reg}(A)$ is closed with respect to $\rightarrow $ (because, by the above, for all $a,b\in {\rm Reg}(A)$, $a\rightarrow b=\neg \, \neg \, (a\rightarrow b)$, hence $a\rightarrow b\in {\rm Reg}(A)$). 

We define on ${\rm Reg}(A)$ the following o\-pe\-ra\-tions: for all $a,b\in {\rm Reg}(A)$, $a\odot ^{*}b=\neg \, \neg \, (a\odot b)$, $a\vee ^{*}b=\neg \, \neg \, (a\vee b)$ and $a\wedge ^{*}b=\neg \, \neg \, (a\wedge b)$. We will always refer to the operations above when studying the algebraic structure of ${\rm Reg}(A)$.

$A$ is said to be {\em Glivenko} iff it satisfies: for all $a\in A$, $\neg \, \neg \, (\neg \, \neg \, a\rightarrow a)=1$. In the case of Glivenko MTL-algebras, the operations $\vee ^{*}$ and $\wedge ^{*}$ coincide with $\vee $ and $\wedge $, respectively (\cite{cito2}).

\begin{proposition}{\rm \cite[Theorem 2.1, page 163]{cito2}} The following are equivalent:

\noindent (i) $A$ is Glivenko;

\noindent (ii) $({\rm Reg}(A),\vee ^{*},\wedge ^{*},\odot ^{*},\rightarrow ,0,1)$ is an involutive residuated lattice and $\neg \, \neg \, :A\rightarrow {\rm Reg}(A)\ :\ a\rightarrow \neg \, \neg \, a$ is a surjective morphism of residuated lattices.
\label{`glivnegneg`}
\end{proposition}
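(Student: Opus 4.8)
The plan is to establish the two implications separately, with the bulk of the work lying in (i) $\Rightarrow$ (ii). Throughout I would work with the operations $\vee^{*}$, $\wedge^{*}$, $\odot^{*}$, $\rightarrow$ already introduced on ${\rm Reg}(A)$, and exploit the map $\varphi = \neg\,\neg\,:A\rightarrow {\rm Reg}(A)$. First I would record the basic facts already available: ${\rm Reg}(A)$ is closed under $\rightarrow$ and contains $0,1$, $\varphi$ is idempotent with image ${\rm Reg}(A)$, and $\varphi$ is order-preserving and a closure operator (using Lemma \ref{`leqres`}(ii),(iii) together with monotonicity of $\neg\,$, which follows from residuation). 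The key technical lemma I would isolate is that $\varphi$ is multiplicative and interacts well with the connectives, i.e.\ $\neg\,\neg\,(a\odot b)=\neg\,\neg\,(\neg\,\neg\,a\odot\neg\,\neg\,b)$ and similarly for $\wedge$, $\vee$, and $\neg\,\neg\,(a\rightarrow b)=\neg\,\neg\,(\neg\,\neg\,a\rightarrow\neg\,\neg\,b)$; the last one is essentially the cited fact from \cite{cito1}. These identities hold in \emph{any} residuated lattice and are what make $\varphi$ a retraction-like map onto ${\rm Reg}(A)$ with the starred operations.

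Assuming (i), I would then verify that $({\rm Reg}(A),\vee^{*},\wedge^{*},\odot^{*},\rightarrow,0,1)$ is a residuated lattice. The lattice axioms for $\vee^{*},\wedge^{*}$ and the bounded-lattice structure follow from the fact that these are the operations transported along the closure operator $\varphi$ onto its image, a standard construction; one uses that for $a,b\in{\rm Reg}(A)$ we have $a\vee^{*}b=\varphi(a\vee b)$ is the least element of ${\rm Reg}(A)$ above both $a$ and $b$, and dually (here $\varphi$ being a closure operator and ${\rm Reg}(A)$ its fixed-point set gives absorption, associativity, commutativity automatically). That $(\mathrm{Reg}(A),\odot^{*},1)$ is a commutative monoid: commutativity is clear, $1$ is a unit since $\varphi(a\odot 1)=\varphi(a)=a$ for regular $a$, and associativity reduces, via the multiplicativity identity above, to associativity of $\odot$ in $A$. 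The residuation law $a\odot^{*}b\leq c\Leftrightarrow a\leq b\rightarrow c$ for $a,b,c\in{\rm Reg}(A)$: the forward direction uses $a\odot b\leq\varphi(a\odot b)=a\odot^{*}b\leq c$ hence $a\leq b\rightarrow c$; the reverse uses that $a\leq b\rightarrow c$ gives $a\odot b\leq c$, hence $\varphi(a\odot b)\leq\varphi(c)=c$. Involutivity is immediate: every element of ${\rm Reg}(A)$ is regular by definition, so $\neg\,\neg\,x=x$ there. Finally, that $\varphi$ is a surjective morphism: surjectivity and preservation of $0,1,\rightarrow$ are already in hand, preservation of $\odot,\wedge,\vee$ is exactly the multiplicativity identities (e.g.\ $\varphi(a\odot b)=\varphi(\varphi(a)\odot\varphi(b))=\varphi(a)\odot^{*}\varphi(b)$). \emph{This is the step where the Glivenko hypothesis is actually consumed}: the identities $\neg\,\neg\,(a\odot b)=\neg\,\neg\,(\neg\,\neg\,a\odot\neg\,\neg\,b)$ etc.\ and the well-behavedness of the starred lattice operations are what require $\neg\,\neg\,(\neg\,\neg\,a\rightarrow a)=1$; I expect this to be the main obstacle, and I would either cite the relevant computations from \cite{cito1,cito2} or reprove them from the Glivenko equation using residuation and Lemma \ref{`leqres`}.

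For (ii) $\Rightarrow$ (i), suppose $\varphi=\neg\,\neg\,$ is a surjective morphism of residuated lattices onto the involutive residuated lattice ${\rm Reg}(A)$. Fix $a\in A$. Since $\varphi$ preserves $\rightarrow$ and is idempotent, $\varphi(\neg\,\neg\,a\rightarrow a)=\varphi(a)\rightarrow\varphi(a)=1$ in ${\rm Reg}(A)$, where I use that $\varphi(\neg\,\neg\,a)=\neg\,\neg\,\neg\,\neg\,a=\neg\,\neg\,a=\varphi(a)$ by Lemma \ref{`leqres`}(iii). But $\varphi(x)=\neg\,\neg\,x$, so $\neg\,\neg\,(\neg\,\neg\,a\rightarrow a)=1$, which is precisely the Glivenko condition. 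One small point to check is that ``$1$'' denotes the same element in $A$ and in ${\rm Reg}(A)$, which holds because $1\in{\rm Reg}(A)$ and a morphism sends $1$ to $1$. This direction is short; the content is entirely in the forward direction.
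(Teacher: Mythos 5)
The paper itself offers no proof of this proposition --- it is quoted verbatim from Cignoli--Torrens \cite[Theorem 2.1]{cito2} --- so there is nothing in-paper to compare against; I can only judge your reconstruction on its own merits. Your overall architecture is sensible (treat $\neg\,\neg\,$ as a closure operator, transport the operations onto its image, split into the two implications), and your (ii) $\Rightarrow$ (i) direction is correct and complete.

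The (i) $\Rightarrow$ (ii) direction, however, has a genuine gap at exactly the point that carries all the content. You assert that the identity $\neg\,\neg\,(a\rightarrow b)=\neg\,\neg\,(\neg\,\neg\,a\rightarrow\neg\,\neg\,b)$ holds ``in any residuated lattice'' and that preservation of $\rightarrow$ is therefore ``already in hand''. This is false: combined with the genuinely universal identity $\neg\,\neg\,(\neg\,\neg\,a\rightarrow\neg\,\neg\,b)=\neg\,\neg\,a\rightarrow\neg\,\neg\,b$ (which is the fact actually cited from \cite{cito1} --- a statement about closure of ${\rm Reg}(A)$ under $\rightarrow$, not about $\neg\,\neg\,$ preserving $\rightarrow$), it would yield $\neg\,\neg\,(\neg\,\neg\,a\rightarrow a)=\neg\,\neg\,(\neg\,\neg\,a\rightarrow\neg\,\neg\,a)=1$, i.e.\ it would make every residuated lattice Glivenko. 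Indeed, your own (ii) $\Rightarrow$ (i) computation shows that preservation of $\rightarrow$ already implies the Glivenko equation, so this identity is precisely where the hypothesis must be consumed --- yet you never derive it from $\neg\,\neg\,(\neg\,\neg\,a\rightarrow a)=1$, and you contradict yourself by first declaring it free and then locating the use of Glivenko in ``the identities \dots\ etc.''. (By contrast, the analogous identities for $\odot$ and $\vee$ really are free: $\neg\,\neg\,$ is a nucleus on any commutative residuated lattice, so $\neg\,\neg\,a\odot\neg\,\neg\,b\leq\neg\,\neg\,(a\odot b)$ and hence $\neg\,\neg\,(a\odot b)=\neg\,\neg\,(\neg\,\neg\,a\odot\neg\,\neg\,b)$; this makes the misattribution more confusing.) A secondary unargued step: preservation of $\wedge$ amounts to $\neg\,\neg\,a\wedge\neg\,\neg\,b\leq\neg\,\neg\,(a\wedge b)$, which does not follow from the closure-operator formalism (only the reverse inequality does) and is nowhere justified. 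To repair the argument you must actually prove $\neg\,\neg\,(a\rightarrow b)=\neg\,\neg\,a\rightarrow\neg\,\neg\,b$ and the $\wedge$-analogue from the Glivenko equation, or cite those specific computations from \cite{cito1,cito2}; as written, the forward implication is not established.
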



Note that ${\rm Reg}(A)=\neg \, A$ (trivial).

\begin{proposition}{\rm \cite[Lemma 2.4, (2), page 116]{cito1}} Let $A$ be a Glivenko re\-si\-du\-a\-ted lattice. Then ${\rm Reg}(A)$ is an MV-algebra (with the operation $\oplus $ defined by: for all $a,b\in {\rm Reg}(A)$, $\neg \, a\oplus \neg \, b=\neg \, (a\odot ^{*}b)$) iff $A$ satisfies the following equation: for all $a,b\in A$, $(\neg \, a\rightarrow \neg \, b)\rightarrow \neg \, b=(\neg \, b\rightarrow \neg \, a)\rightarrow \neg \, a$.
\label{regmv}
\end{proposition}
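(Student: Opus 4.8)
The plan is to derive this from a standard characterisation of MV-algebras inside involutive residuated lattices. Since $A$ is Glivenko, Proposition \ref{`glivnegneg`} gives that $R:=({\rm Reg}(A),\vee^{*},\wedge^{*},\odot^{*},\rightarrow,0,1)$ is an involutive residuated lattice, and on $R$ both $\rightarrow$ and $\neg\,$ are simply the restrictions of the operations of $A$ (recall that ${\rm Reg}(A)$ is closed under $\rightarrow$ and that $\neg\, x=x\rightarrow 0$ with $0\in{\rm Reg}(A)$). I would first observe that the operation $\oplus$ in the statement is well defined on $R$: every element of $R$ is regular, so each $x\in R$ has a \emph{unique} representative of the form $\neg\, a$ with $a\in{\rm Reg}(A)$, namely $a=\neg\, x$; hence $\neg\, a\oplus\neg\, b=\neg\,(a\odot^{*}b)$ defines $\oplus$ without ambiguity, and, using $\neg\,\neg\,\neg\,=\neg\,$ (Lemma \ref{`leqres`}(iii)), one gets $x\oplus y=\neg\,((\neg\, x)\odot^{*}(\neg\, y))=\neg\,(\neg\, x\odot\neg\, y)$, which is exactly the usual ``MV-sum'' attached to the involutive residuated lattice $R$.

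The main step is then the purely algebraic lemma: for an involutive residuated lattice $(R,\vee,\wedge,\odot,\rightarrow,0,1)$ with $x\oplus y:=\neg\,(\neg\, x\odot\neg\, y)$, the structure $(R,\oplus,\neg\,,0)$ is an MV-algebra if and only if $(x\rightarrow y)\rightarrow y=(y\rightarrow x)\rightarrow x$ holds for all $x,y\in R$. Here $\oplus$ is commutative by contraposition (in an involutive residuated lattice $\neg\, x\rightarrow y=\neg\, y\rightarrow x$), associative because $\odot$ is associative and $\neg\,\neg\,$ is the identity on $R$, has $0$ as unit since $\neg\,(\neg\, 0\odot\neg\, x)=\neg\,\neg\, x=x$, and satisfies $x\oplus\neg\, 0=x\oplus 1=1$ because $0$ is absorbing for $\odot$; involutivity of $\neg\,$ is part of the hypothesis. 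The only remaining MV-axiom $\neg\,(\neg\, x\oplus y)\oplus y=\neg\,(\neg\, y\oplus x)\oplus x$ then simplifies, via $\neg\,(\neg\, x\oplus y)=\neg\,\neg\,(x\odot\neg\, y)=x\odot\neg\, y$, then $\neg\,(x\odot\neg\, y)=x\rightarrow y$, and finally $\neg\,((x\rightarrow y)\odot\neg\, y)=(x\rightarrow y)\rightarrow y$, to precisely $(x\rightarrow y)\rightarrow y=(y\rightarrow x)\rightarrow x$. Carrying out this chain of rewritings cleanly — and, throughout, keeping straight which operations live in $A$ and which in ${\rm Reg}(A)$, since only $\rightarrow$ and $\neg\,$ restrict while $\odot^{*},\vee^{*},\wedge^{*}$ genuinely differ from $\odot,\vee,\wedge$ — is the part I expect to take the most care.

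Finally I would put the pieces together. Since ${\rm Reg}(A)=\neg\, A$, the pairs $(x,y)\in R\times R$ are exactly the pairs $(\neg\, a,\neg\, b)$ with $a,b\in A$, and the implication $\rightarrow$ of $R$ coincides with that of $A$; therefore the assertion ``$(x\rightarrow y)\rightarrow y=(y\rightarrow x)\rightarrow x$ for all $x,y\in R$'' is literally the assertion ``$(\neg\, a\rightarrow\neg\, b)\rightarrow\neg\, b=(\neg\, b\rightarrow\neg\, a)\rightarrow\neg\, a$ for all $a,b\in A$''. Applying the lemma to the involutive residuated lattice $R$ of the first paragraph yields the equivalence claimed. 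I would also add a remark that the residuated-lattice operations reconstructed from this MV-structure agree with $\vee^{*},\wedge^{*},\odot^{*},\rightarrow$: the implication and the product are recovered directly from $\oplus$ and $\neg\,$, while the MV-order ``$x\rightarrow y=1$'' is the lattice order of $R$, so the induced join and meet are $\vee^{*}$ and $\wedge^{*}$.
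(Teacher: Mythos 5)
Your proof is correct, but note that the paper itself does not prove this proposition: it is imported verbatim from Cignoli--Torrens ({\rm \cite[Lemma 2.4, (2)]{cito1}}), so there is no internal argument to compare against; what you supply is a self-contained verification. Your route is organized around a clean general lemma: in any involutive residuated lattice $R$ with $x\oplus y:=\neg \,(\neg \, x\odot \neg \, y)$, all of the MV-axioms listed in Section \ref{prelim1} except the {\L}ukasiewicz axiom hold automatically, and the {\L}ukasiewicz axiom rewrites, via the currying identity $(u\odot v)\rightarrow w=u\rightarrow (v\rightarrow w)$ (hence $\neg \,(u\odot v)=u\rightarrow \neg \, v$) and involutivity, to $(x\rightarrow y)\rightarrow y=(y\rightarrow x)\rightarrow x$. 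Combined with Proposition \ref{`glivnegneg`} (which makes ${\rm Reg}(A)$ an involutive residuated lattice whose implication is inherited from $A$) and the identity ${\rm Reg}(A)=\neg \, A$ (which turns the equation quantified over ${\rm Reg}(A)$ into the stated equation quantified over $A$), this yields exactly the claimed equivalence. Two small points deserve explicit attention in a polished write-up: state the currying identity once, since you use it repeatedly; and observe that your step $\neg \, \neg \,(x\odot \neg \, y)=x\odot \neg \, y$ is legitimate only because the product in your abstract lemma is the product $\odot ^{*}$ of the involutive algebra ${\rm Reg}(A)$, so that $x\odot ^{*}\neg \, y$ is again regular --- you do flag this bookkeeping, and in any case $\neg \, \neg \, \neg \,=\neg \,$ makes the step harmless because $\oplus$ only sees its arguments through $\neg \,$. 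Your closing remark that the derived MV-operations recover $\vee ^{*}$, $\wedge ^{*}$, $\odot ^{*}$, $\rightarrow $ is not required by the statement as formulated, but it is the right sanity check and is consistent with the way the paper later uses this proposition (e.g.\ in Proposition \ref{ecmv}).
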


\bprop\label{B(A)=B(MV(A))}
Let $A$ be a Glivenko MTL-algebra. Then $B(A)=$\linebreak $B({\rm Reg}(A))$.
\eprop 
\begin{proof}
Obviously, $B({\rm Reg}(A))\subseteq B(A)$. For the converse set inclusion, let us consider an element $e\in B(A)$. By Propositions \ref{car-B(A)} and \ref{`glivnegneg`}, $e=\neg \, \neg \, e\in B(A)\cap {\rm Reg}(A)$ and $\neg \, e\in {\rm Reg}(A)=\neg \, A$. These two properties imply that $e\in B({\rm Reg}(A))$.\end{proof}

An element $a$ of $A$ is said to be {\em dense} iff $\neg \,a=0$. We denote by $Ds(A)$ the set of the dense elements of $A$. The intersection of all ma\-xi\-mal filters of $A$ is called {\em the radical of $A$} and is denoted ${\rm Rad}(A)$. $Ds(A)$ is a filter of $A$ and $Ds(A)\subseteq {\rm Rad}(A)$ (\cite{fre}, \cite{kow}, \cite[Theorem 1.61]{pic}).

\begin{proposition}{\rm \cite[Theorem 3.4, page 117]{cito1}} Let $A$ be a Glivenko residuated lattice and $\theta :A/Ds(A)\rightarrow {\rm Reg}(A)$, for all $a\in A$, $\theta (a/Ds(A))=\neg \, \neg \, a$. Then $\theta $ is an isomorphism of residuated lattices and the following diagram is commutative:

\begin{center}
\begin{picture}(70,70)(0,0)
\put(2,43){$A$}
\put(12,48){\vector(1,0){38}}
\put(27,51){$p_{A}$}
\put(53,43){$A/Ds(A)$}
\put(56,40){\vector(0,-1){26}}
\put(59,25){$\theta $}
\put(10,40){\vector(4,-3){40}}
\put(10,20){$\neg \, \neg \, $}
\put(53,2){${\rm Reg}(A)$}
\end{picture}
\end{center}

\noindent where, for all $a\in A$, $p_{A}(a)=a/Ds(A)$.
\label{`dreg`}
\end{proposition}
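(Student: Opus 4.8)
The plan is to realize $\theta$ as the isomorphism induced by the surjective morphism of residuated lattices $\neg\,\neg\,:A\to{\rm Reg}(A)$ from Proposition~\ref{`glivnegneg`}, via the first isomorphism theorem, once the congruence kernel of $\neg\,\neg$ is identified with $\equiv({\rm mod}\ Ds(A))$. The key computation is to find the filter attached to $\ker(\neg\,\neg)=\{(a,b)\mid\neg\,\neg\,a=\neg\,\neg\,b\}$, i.e.\ its $1$-class $\{a\in A\mid\neg\,\neg\,a=\neg\,\neg\,1\}$. Since $\neg\,1=0$ and $\neg\,0=1$ by Lemma~\ref{`leqres`}(iv), this is $\{a\in A\mid\neg\,\neg\,a=1\}$, and I claim this set is exactly $Ds(A)$: if $\neg\,\neg\,a=1$, then $\neg\,a=\neg\,\neg\,\neg\,a=\neg\,1=0$ by Lemma~\ref{`leqres`}(iii),(iv), so $a$ is dense; conversely $\neg\,a=0$ gives $\neg\,\neg\,a=\neg\,0=1$. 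Using the standard bijection between filters and congruences of a residuated lattice (a filter $F$ corresponds to $\equiv({\rm mod}\ F)$, a congruence to its $1$-class), this yields $\ker(\neg\,\neg)=\equiv({\rm mod}\ Ds(A))$, so $A/Ds(A)$ and $A/\ker(\neg\,\neg)$ are the same residuated lattice.

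From here the first isomorphism theorem gives an isomorphism of residuated lattices $A/\ker(\neg\,\neg)\to{\rm im}(\neg\,\neg)$, $a/\ker(\neg\,\neg)\mapsto\neg\,\neg\,a$; since $\neg\,\neg$ is onto ${\rm Reg}(A)$ by Proposition~\ref{`glivnegneg`}, the image is ${\rm Reg}(A)$, and under the identification of the two quotients this map is precisely $\theta$. Commutativity of the triangle is then immediate: $\theta(p_{A}(a))=\theta(a/Ds(A))=\neg\,\neg\,a$ for every $a\in A$. Should one prefer to avoid the abstract correspondence, the same conclusions can be reached by hand: well-definedness and injectivity of $\theta$ amount to the equivalence $a\leftrightarrow b\in Ds(A)\Leftrightarrow\neg\,\neg\,a=\neg\,\neg\,b$, whose ``$\Leftarrow$'' direction follows because the morphism property gives $\neg\,\neg\,(a\rightarrow b)=\neg\,\neg\,a\rightarrow\neg\,\neg\,b=1$ (Lemma~\ref{`leqres`}(i)), so $a\rightarrow b$ and $b\rightarrow a$ are dense and hence $a\leftrightarrow b=(a\rightarrow b)\wedge(b\rightarrow a)\in Ds(A)$ (filters being closed under $\wedge$, as $x\odot y\leq x\wedge y$), while ``$\Rightarrow$'' comes from applying $\neg\,\neg$ to $a\leftrightarrow b$ and using that $1$ is the top of ${\rm Reg}(A)$; surjectivity holds since $x=\neg\,\neg\,x=\theta(x/Ds(A))$ for $x\in{\rm Reg}(A)$; and $\theta$ is a morphism because $\theta\circ p_{A}=\neg\,\neg$ is one and $p_{A}$ is a surjective morphism.

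I expect the only real subtlety to be bookkeeping inside ${\rm Reg}(A)$: its lattice and monoid operations are $\vee^{*},\wedge^{*},\odot^{*}$ rather than the restrictions of $\vee,\wedge,\odot$, so any manipulation of $\neg\,\neg$ must be routed through the morphism statement of Proposition~\ref{`glivnegneg`} (or, equivalently, through the filter--congruence correspondence) rather than through naive identities computed in $A$. Everything else in the argument is formal.
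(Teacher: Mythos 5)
Your argument is correct. Note first that the paper itself offers no proof of this proposition: it is imported verbatim from Cignoli--Torrens (\cite[Theorem 3.4]{cito1}), so there is nothing internal to compare against. What you supply is a clean derivation from the other quoted ingredient, Proposition~\ref{`glivnegneg`}: the map $\neg\,\neg:A\to{\rm Reg}(A)$ is a surjective morphism, its congruence kernel has $1$-class $\{a\mid\neg\,\neg\,a=1\}$, and the computation $\neg\,\neg\,a=1\Leftrightarrow\neg\,a=\neg\,\neg\,\neg\,a=\neg\,1=0\Leftrightarrow a\in Ds(A)$ identifies that class with $Ds(A)$; the first isomorphism theorem then hands you $\theta$, and commutativity of the triangle is definitional. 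The only step you lean on without proof is the bijection between filters and congruences of a (commutative, integral, bounded) residuated lattice --- needed to conclude that two congruences with the same $1$-class coincide, i.e.\ that $\ker(\neg\,\neg)$ is exactly $\equiv({\rm mod}\ Ds(A))$ rather than merely containing it. That bijection is standard and is implicitly assumed by the paper when it forms $A/F$, so this is not a gap, but it is the one place where a referee might ask you to cite chapter and verse. Your hands-on alternative (well-definedness and injectivity via $a\leftrightarrow b\in Ds(A)\Leftrightarrow\neg\,\neg\,a=\neg\,\neg\,b$, using $\neg\,\neg(a\rightarrow b)=\neg\,\neg\,a\rightarrow\neg\,\neg\,b$ and monotonicity of $\neg\,\neg$ for the two directions) bypasses that correspondence entirely and is the safer route if you want the proof fully self-contained; your caution about routing computations in ${\rm Reg}(A)$ through the starred operations is well placed but, as you note, harmless here since ${\rm Reg}(A)$ is closed under the unstarred $\rightarrow$.
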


%



\begin{proposition}{\rm \cite[Lemma 3.3, page 117]{cito1}} Let $A$ be a residuated lattice. Then the following are equivalent:

\noindent (i) $A/Ds(A)$ is involutive; 

\noindent (ii) $A$ is Glivenko.
\label{`glivinv`}
\end{proposition}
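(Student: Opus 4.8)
The plan is to unwind the definition of involutivity in the quotient $A/Ds(A)$ and match it, condition by condition, with the Glivenko equation. First I would record that the canonical surjection $p_{A}:A\rightarrow A/Ds(A)$, $p_{A}(a)=a/Ds(A)$, is a morphism of residuated lattices, so it commutes with $\neg \, $ (since $\neg \, x=x\rightarrow 0$ and $\rightarrow $, $0$ are preserved); in particular $\neg \, \neg \, (a/Ds(A))=(\neg \, \neg \, a)/Ds(A)$ for every $a\in A$. Hence $A/Ds(A)$ is involutive if and only if $(\neg \, \neg \, a)/Ds(A)=a/Ds(A)$ for all $a\in A$, which by the definition of the congruence $\equiv ({\rm mod}\ Ds(A))$ means $\neg \, \neg \, a\leftrightarrow a\in Ds(A)$ for all $a\in A$.

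Next I would simplify $\neg \, \neg \, a\leftrightarrow a$. Since $a\leq \neg \, \neg \, a$ by Lemma \ref{`leqres`}, (ii), Lemma \ref{`leqres`}, (i), gives $a\rightarrow \neg \, \neg \, a=1$, so $\neg \, \neg \, a\leftrightarrow a=(\neg \, \neg \, a\rightarrow a)\wedge (a\rightarrow \neg \, \neg \, a)=\neg \, \neg \, a\rightarrow a$. Therefore ``$A/Ds(A)$ is involutive'' is equivalent to ``$\neg \, \neg \, a\rightarrow a\in Ds(A)$ for all $a\in A$'', that is, by the definition of $Ds(A)$, to ``$\neg \, (\neg \, \neg \, a\rightarrow a)=0$ for all $a\in A$''.

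Finally I would prove the elementary equivalence: for any $x\in A$, $\neg \, x=0$ iff $\neg \, \neg \, x=1$. The forward implication is $\neg \, \neg \, x=\neg \, 0=1$ by Lemma \ref{`leqres`}, (iv); for the converse, Lemma \ref{`leqres`}, (iii) and (iv), give $\neg \, x=\neg \, \neg \, \neg \, x=\neg \, (\neg \, \neg \, x)=\neg \, 1=0$. Applying this with $x=\neg \, \neg \, a\rightarrow a$, the condition ``$\neg \, (\neg \, \neg \, a\rightarrow a)=0$ for all $a$'' is equivalent to ``$\neg \, \neg \, (\neg \, \neg \, a\rightarrow a)=1$ for all $a$'', which is precisely the definition of $A$ being Glivenko. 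Chaining the equivalences yields (i) $\Leftrightarrow$ (ii).

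The argument is a chain of definition-unwindings, so there is no serious obstacle; the only points needing a little care are the passage from the quotient back to $A$ (checking that $p_{A}$ commutes with $\neg \, $, and that equality in $A/Ds(A)$ translates to membership of $\leftrightarrow $ in $Ds(A)$), the reduction $\neg \, \neg \, a\leftrightarrow a=\neg \, \neg \, a\rightarrow a$, and the small observation $\neg \, x=0\Leftrightarrow \neg \, \neg \, x=1$, which is exactly what makes the ``$\neg \, \neg \, (\cdots )=1$'' of the Glivenko equation and the density of $\neg \, \neg \, a\rightarrow a$ line up.
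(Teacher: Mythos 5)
Your proof is correct and complete. The paper itself gives no proof of this proposition (it is quoted from Cignoli--Torrens, Lemma 3.3 of the cited reference), so there is nothing to compare against; your chain of equivalences --- quotient involutivity $\Leftrightarrow$ $\neg\,\neg\, a\leftrightarrow a\in Ds(A)$ for all $a$, the reduction of $\neg\,\neg\, a\leftrightarrow a$ to $\neg\,\neg\, a\rightarrow a$ via $a\leq \neg\,\neg\, a$, and the observation $\neg\, x=0\Leftrightarrow \neg\,\neg\, x=1$ --- is exactly the natural definition-unwinding argument, and every step is justified by the lemmas available in the paper.
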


\section{Dense Elements and the Radical of a Residuated Lattice}
\label{denselem}

\hspace*{11pt} In this section we study the set of the dense elements and the radical of an arbitrary residuated lattice, as well as those of certain classes of residuated lattices.

\begin{proposition}
Let $A$ be a residuated lattice, $M\in {\rm Max}(A)$ and $a\in A$. Then:

\noindent (i) $a/Ds(A)\in M/Ds(A)$ iff $a\in M$;

\noindent (ii) ${\rm Max}(A/Ds(A))=\{N/Ds(A)|N\in {\rm Max}(A)\}$;

\noindent (iii) $a/Ds(A)\in {\rm Rad}(A)/Ds(A)$ iff $a\in {\rm Rad}(A)$;

\noindent (iv) ${\rm Rad}(A/Ds(A))={\rm Rad}(A)/Ds(A)$;

\noindent (v) ${\rm Max}(A)$ and ${\rm Max}(A/Ds(A))$ are homeomorphic topological spaces.
\label{`maxrad`}
\end{proposition}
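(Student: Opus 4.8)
The strategy is to reduce everything to two facts: that $Ds(A)\subseteq {\rm Rad}(A)$ forces $Ds(A)\subseteq M$ for every maximal filter $M$ of $A$, and the standard correspondence theorem for filters, namely that $N\mapsto N/Ds(A)$ is an inclusion-preserving bijection between the filters of $A$ containing $Ds(A)$ and the filters of $A/Ds(A)$, with inverse $G\mapsto p_A^{-1}(G)$.

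First I would prove (i), which is the computational core. If $a/Ds(A)\in M/Ds(A)$, then $a\equiv m\,({\rm mod}\ Ds(A))$ for some $m\in M$, i.e. $a\leftrightarrow m\in Ds(A)$; since $Ds(A)\subseteq {\rm Rad}(A)\subseteq M$ we get $a\leftrightarrow m\in M$, and because $a\leftrightarrow m=(a\rightarrow m)\wedge (m\rightarrow a)\leq m\rightarrow a$, upward closure of $M$ gives $m\rightarrow a\in M$; applying the deductive system property to $m\in M$ and $m\rightarrow a\in M$ yields $a\in M$. The reverse implication is immediate from $M/Ds(A)=\{m/Ds(A)\mid m\in M\}$. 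I would then observe that the identical argument, with the filter ${\rm Rad}(A)$ in the role of $M$ (it also contains $Ds(A)$), proves (iii).

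For (ii) I would invoke the correspondence theorem above: since every maximal filter of $A$ contains $Ds(A)$, the maximal filters of $A$ are exactly the maximal elements of the poset of filters of $A$ containing $Ds(A)$, and an order isomorphism carries maximal elements to maximal elements and proper filters to proper filters; hence ${\rm Max}(A/Ds(A))=\{N/Ds(A)\mid N\in {\rm Max}(A)\}$. Part (iv) is then a short computation: by (ii), ${\rm Rad}(A/Ds(A))=\bigcap_{N\in {\rm Max}(A)}N/Ds(A)$, and by (i) (using that $p_A$ is onto, so every element of $A/Ds(A)$ has the form $a/Ds(A)$) an element $a/Ds(A)$ lies in this intersection iff $a\in N$ for all $N\in {\rm Max}(A)$, i.e. iff $a\in {\rm Rad}(A)$; this is precisely the assertion ${\rm Rad}(A/Ds(A))={\rm Rad}(A)/Ds(A)$. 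For (v) I would set $\varphi:{\rm Max}(A)\rightarrow {\rm Max}(A/Ds(A))$, $\varphi(N)=N/Ds(A)$; surjectivity is (ii) and injectivity follows from (i) (equal quotients force equal filters). Since $p_A$ is onto, the canonical basis of ${\rm Max}(A/Ds(A))$ consists of the sets $S_{\rm Max}(a/Ds(A))$, $a\in A$, and by (i) together with (ii) one checks $\varphi(S_{\rm Max}(a))=S_{\rm Max}(a/Ds(A))$; thus $\varphi$ is a bijection mapping the canonical basis of ${\rm Max}(A)$ bijectively onto that of ${\rm Max}(A/Ds(A))$, so both $\varphi$ and $\varphi^{-1}$ are continuous.

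I expect the only real obstacle to be making the implication in (i) fully rigorous --- the step $a\leftrightarrow m\in M,\ m\in M\Rightarrow a\in M$ --- because this one lemma underlies (iii), the nontrivial inclusion in (iv), and both injectivity and the computation of $\varphi$ on basic opens in (v); the correspondence theorem used in (ii) is entirely routine for residuated lattices.
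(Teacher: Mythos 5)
Your proof is correct and follows essentially the same route as the paper's: (i) via the deductive-system property together with $Ds(A)\subseteq {\rm Rad}(A)\subseteq M$, (ii) via the correspondence between filters of $A/Ds(A)$ and filters of $A$ containing $Ds(A)$, (iv) by combining (i) and (ii), and (v) via the bijection $N\mapsto N/Ds(A)$ evaluated on the basic open sets $S_{\rm Max}(a)$. The only (harmless) deviation is that you prove (iii) by rerunning the argument of (i) with the filter ${\rm Rad}(A)$ in place of $M$, whereas the paper deduces it from (i) by intersecting over all maximal filters; both are valid.
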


\begin{proof}
(i) If $a/Ds(A)\in M/Ds(A)$ then there exists $b\in M$ such that\linebreak $a/Ds(A)=b/Ds(A)$, so $a\leftrightarrow b\in Ds(A)\subseteq {\rm Rad}(A)\subseteq M$, and, since $b\in M$, this implies $a\in M$ (remember that the notion of filter is equivalent to that of deductive system). The converse implication is obvious.

\noindent (ii) Obviously, ${\cal{F}}(A/Ds(A))=\{F/Ds(A)|F\in {\cal{F}}(A),F\supseteq Ds(A)\}$. Let $N\in {\cal{F}}(A)$ such that $N\supseteq Ds(A)$. By (i), we have: $N\in {\rm Max}(A)$ iff $N\neq A$ and $(\forall F\in {\cal{F}}(A)\setminus \{A\})\, N\subseteq F\Rightarrow N=F$ iff $N/Ds(A)\neq A/Ds(A)$ and $(\forall F/Ds(A)\in {\cal{F}}(A/Ds(A))\setminus \{A/Ds(A)\})\, N/Ds(A)\subseteq F/Ds(A)\Rightarrow N/Ds(A)=F/Ds(A)$ iff $N/Ds(A)\in {\rm Max}(A/Ds(A))$, hence the desired equality.

\noindent (iii) By (i), $a/Ds(A)\in {\rm Rad}(A)/Ds(A)$ iff $(\forall N\in {\rm Max}(A))\, a/Ds(A)\in N/Ds(A)$ iff $(\forall N\in {\rm Max}(A))\, a\in N$ iff $a\in {\rm Rad}(A)$.

\noindent (iv) According to (ii), $\displaystyle {\rm Rad}(A/Ds(A))=\bigcap _{N\in {\rm Max}(A)}N/Ds(A)=$\linebreak $\displaystyle \left( \bigcap _{N\in {\rm Max}(A)}N\right) /Ds(A)={\rm Rad}(A)/Ds(A)$.

\noindent (v) Let us define $h:{\rm Max}(A)\rightarrow {\rm Max}(A/Ds(A))$, for all $N\in {\rm Max}(A)$, $h(N)=N/Ds(A)$. By (ii), this function is well defined and surjective.

Let $M_{1},M_{2}\in {\rm Max}(A)$ such that $h(M_{1})=h(M_{2})$, that is: $M_{1}/Ds(A)=M_{2}/Ds(A)$. Let $b\in A$. (i) and this last equality show that: $b\in M_{1}$ iff $b/Ds(A)\in M_{1}/Ds(A)$ iff $b/Ds(A)\in M_{2}/Ds(A)$ iff $b\in M_{2}$. Hence $M_{1}=M_{2}$, so $h$ is injective.

It remains to prove that $h$ is continuous and open. Let $b\in A$. By using in turn (ii) and (i), we get: $S_{\rm Max}(b/Ds(A))=\{N/Ds(A)|N\in {\rm Max}(A),$\linebreak $b/Ds(A)\notin N/Ds(A)\}=\{N/Ds(A)|N\in {\rm Max}(A),b\notin N\}=\{N/Ds(A)|N\in S_{\rm Max}(b)\}=\{h(N)|N\in S_{\rm Max}(b)\}=h(S_{\rm Max}(b))$. So $h$ is open. This and the injectivity of $h$ prove that $h^{-1}(S_{\rm Max}(b/Ds(A)))=S_{\rm Max}(b)$. So $h$ is continuous. 

Therefore $h$ is a homeomorphism of topological spaces.\end{proof}

\begin{lemma}
\noindent (i) For any residuated lattice $A$, ${\rm Rad}(A)=\{a\in A|(\forall \, n\in {\rm I\!N}^{*})\, (\exists \, m\in {\rm I\!N}^{*})\, \neg \, ((\neg \, (a^{n}))^{m})=1\}$.

\noindent (ii) Let $A$ be a residuated lattice and $B$ a subalgebra of $A$. Then: ${\rm Rad}(B)=B\cap {\rm Rad}(A)$.

\noindent (iii) Let $\{A_{i}|i\in I\}$ be a family of residuated lattices. Then: $\displaystyle {\rm Rad}(\prod _{i\in I}A_{i})=\prod _{i\in I}{\rm Rad}(A_{i})$.
%
\label{`rad`}
\end{lemma}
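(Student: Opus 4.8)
The plan is to prove the three parts essentially independently, with part (i) providing a concrete description of the radical that makes parts (ii) and (iii) almost formal.

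For part (i), the key is the well-known characterization of the radical of a residuated lattice as $\mathrm{Rad}(A)=\{a\in A\mid \forall n\in\mathbb{N}^{*},\ (\neg\,(a^{n}))\in Ds(A)\ \text{``eventually''}\}$; more precisely, an element lies in every maximal filter iff for every $n$ the element $\neg\,(a^{n})$ is ``nilpotent enough'' to be thrown out of every maximal (equivalently, every proper) filter. I would argue as follows. Recall that a proper filter $F$ is contained in some maximal filter, and that $x\notin$ every maximal filter iff the filter generated by $x$ is improper, i.e. iff $x^{m}=0$ for some $m\in\mathbb{N}^{*}$ (since the filter generated by $x$ is $\{y\in A\mid y\geq x^{k}\ \text{for some }k\}$). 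Now $a\in\mathrm{Rad}(A)$ iff $a\in M$ for all $M\in\mathrm{Max}(A)$. Using that each $M$ is prime and proper, and the standard fact that $a\in M$ iff $\neg\,(a^{n})\notin M$ for all $n$ (one direction: if $a\in M$ then $a^{n}\in M$, and $a^{n}\odot\neg\,(a^{n})=0\notin M$ forces $\neg\,(a^{n})\notin M$; the other direction uses maximality and primeness to show that if $a\notin M$ then the filter generated by $M\cup\{a\}$ is improper, yielding $m\odot a^{n}=0$ for some $m\in M$, whence $m\leq\neg\,(a^{n})\in M$). Combining: $a\in\mathrm{Rad}(A)$ iff for all $M\in\mathrm{Max}(A)$ and all $n$, $\neg\,(a^{n})\notin M$, iff for all $n$, $\neg\,(a^{n})$ lies in no maximal filter, iff for all $n$ there exists $m$ with $(\neg\,(a^{n}))^{m}=0$, i.e. $\neg\,((\neg\,(a^{n}))^{m})=1$ by Lemma \ref{`leqres`}(i) and (iv). The main obstacle here is pinning down cleanly the equivalence ``$x$ lies in no maximal filter iff $x$ is nilpotent'', which needs the fact that every proper filter extends to a maximal one (a Zorn's lemma argument) together with the explicit form of principal filters.

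For part (ii), let $B$ be a subalgebra of $A$. The inclusion $\mathrm{Rad}(B)\supseteq B\cap\mathrm{Rad}(A)$ and the reverse both follow immediately from the description in (i), since the defining condition ``$\forall n\,\exists m\ \neg\,((\neg\,(a^{n}))^{m})=1$'' only involves the operations $\odot$, $\neg$ and the constants $0,1$, all of which are computed the same way in $B$ as in $A$ (because $B$ is a subalgebra). So an element $a\in B$ satisfies the radical condition in $B$ iff it satisfies it in $A$; intersecting with $B$ gives the claim. This step is routine once (i) is in hand.

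For part (iii), again invoke (i): for the product $\prod_{i\in I}A_{i}$, operations are computed componentwise, so for $a=(a_{i})_{i\in I}$ the element $\neg\,((\neg\,(a^{n}))^{m})$ equals $(\neg\,((\neg\,(a_{i}^{n}))^{m}))_{i\in I}$, and this equals the top element $1=(1)_{i\in I}$ iff each component equals $1$. The only subtlety is the order of quantifiers: ``$\forall n\,\exists m$'' must hold simultaneously in all coordinates with a single $m$, but this is fine — we need $a\in\mathrm{Rad}(\prod A_{i})$ iff for each $n$ there is a common $m$ working in every coordinate, versus $a\in\prod\mathrm{Rad}(A_{i})$ iff for each $i$ and each $n$ there is an $m_{i}$ working in coordinate $i$. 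These agree: given $n$, if such $m_i$ exist for each $i$, note that $(\neg\,(a_i^n))^{m}=0$ for all $m\geq m_i$, so any $m$ would need to dominate all $m_i$ — which may be impossible if $I$ is infinite. The honest route is therefore to observe that the condition in (i) is equivalent to the purely membership condition ``$a\in M$ for all $M\in\mathrm{Max}$'', and use instead the standard description of maximal filters of a product, OR — cleaner — to note that $\mathrm{Rad}$ is the intersection of maximal filters and maximal filters of a product $\prod_{i\in I}A_i$ are exactly the sets of the form $\{x\mid x_{i_0}\in M\}$ for $i_0\in I$, $M\in\mathrm{Max}(A_{i_0})$ (plus the case where some $A_i$ is trivial), from which $\mathrm{Rad}(\prod A_i)=\prod\mathrm{Rad}(A_i)$ follows directly. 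I expect the infinite-product quantifier issue to be the one genuinely delicate point, and I would resolve it by falling back on the filter-theoretic description rather than the arithmetic one of part (i).
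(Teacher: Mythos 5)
Your proof of (i) is correct and in fact supplies the argument that the paper only cites from the literature (Freytes, Kowalski--Ono, Piciu), and (ii) follows from it exactly as you say. The genuine problem is (iii). You are right to flag the quantifier obstruction: by (i), $a\in {\rm Rad}(\prod _{i\in I}A_{i})$ requires, for each $n$, a {\em single} $m$ with $(\neg \, (a_{i}^{n}))^{m}=0$ in every coordinate, whereas $a\in \prod _{i\in I}{\rm Rad}(A_{i})$ only provides an $m_{i}$ depending on $i$; since powers decrease, this is harmless when $I$ is finite (take $m=\max _{i}m_{i}$), but not otherwise. (The paper's one-line proof, which declares (iii) an obvious consequence of (i), overlooks exactly this point.) Your proposed repair, however, rests on a false premise: the maximal filters of an infinite product are {\em not} all of the form $\{x\mid x_{i_{0}}\in M\}$. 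Already for a product of countably many copies of the two-element residuated lattice, the maximal filters correspond to the ultrafilters on the index set, most of which are non-principal. The coordinate filters you describe are indeed maximal and yield the inclusion ${\rm Rad}(\prod _{i}A_{i})\subseteq \prod _{i}{\rm Rad}(A_{i})$, but the reverse inclusion needs every maximal filter of the product --- including the non-principal ones --- to contain $\prod _{i}{\rm Rad}(A_{i})$, and this is precisely where the missing uniform $m$ reappears.

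In fact no repair along these lines is possible for arbitrary $I$. If for each $k\geq 2$ one builds a residuated lattice $A_{k}$ containing an element $\alpha _{k}\in {\rm Rad}(A_{k})$ with $ord(\neg \, \alpha _{k})=k$ (for instance, the complete lattice of nonempty downward closed subsets of the commutative monoid generated by $a,b$ subject to $ab=0$ and $b^{k}=0$, ordered by divisibility and equipped with complex multiplication; there $\alpha _{k}$ is the down-set generated by $a$ and $\neg \, \alpha _{k}$ is the down-set generated by $b$), then $\beta =(\neg \, \alpha _{k})_{k}$ has infinite order in $P=\prod _{k}A_{k}$, hence generates a proper filter contained in some maximal filter $M$ of $P$, and $\alpha =(\alpha _{k})_{k}\notin M$ because $\alpha \odot \beta =0$; thus $\alpha \in \prod _{k}{\rm Rad}(A_{k})\setminus {\rm Rad}(P)$. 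So (iii) should be read, and proved by your own $\max _{i}m_{i}$ observation, for finite families only; for infinite families both your argument and the paper's break down, and the statement itself fails.
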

\begin{proof}
(i) can be found in \cite{fre}, \cite{kow}, \cite[Theorem 1.61, page 35]{pic}. (ii) and (iii) are obvious consequences of (i).\end{proof}

In \cite{eu3}, it is proven that, For any residuated lattice $A$ and any filter $F$ of $A$ such that $F\subseteq Ds(A)$, we have $Ds(A/F)=(Ds(A))/F$. The proposition below shows that this is not the case for any filter.

\begin{proposition}
There exist residuated lattices $A$ and filters $F$ of $A$ such that $Ds(A/F)\neq (Ds(A))/F$.
\label{`d`}
\end{proposition}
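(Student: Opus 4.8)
The plan is to produce an explicit finite counterexample. A preliminary observation localizes the search: if $A$ is Glivenko, then $Ds(A/F)=(Ds(A))/F$ for \emph{every} filter $F$. Indeed, if $\neg\,(a/F)=0/F$, then $\neg\,\neg\,a\in F$; the element $d=\neg\,\neg\,a\rightarrow a$ is dense by the Glivenko condition, and residuation gives $a\leq d$ and $\neg\,\neg\,a\leq d\rightarrow a$, so $d\leftrightarrow a\in F$ and $a/F=d/F\in(Ds(A))/F$. Hence the counterexample cannot be an MV-algebra, a BL-algebra, a Heyting algebra, or any Glivenko residuated lattice. Moreover, since we shall want a filter $F$ that is proper and not contained in $Ds(A)$, over a finite $A$ this filter must contain an idempotent $e$ with $0<e<1$ and $\neg\,e\neq 0$, and it is natural to arrange $e=\neg\,\neg\,a$ for some $a\notin F$.

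Following this, I would take $A$ to be the five-element chain $0<a<b<c<1$, with $1$ the monoid unit, $0$ absorbing, and the remaining products $c\odot c=c$, $\ b\odot b=b\odot c=b$, $\ a\odot a=a\odot b=a\odot c=0$. A short case check shows $\odot$ is associative and order preserving (a product of elements of $\{a,b,c\}$ equals $0$ whenever $a$ occurs among the factors, and lies in $\{b,c\}$ otherwise), so $A$, with $\rightarrow$ the residuum of $\odot$, is a residuated lattice. Computing $\rightarrow$ gives $\neg\,0=1$, $\neg\,a=c$, $\neg\,b=a$, $\neg\,c=a$, $\neg\,1=0$; hence $\neg\,\neg\,b=c$ and $Ds(A)=\{1\}$, and, as $A/Ds(A)\cong A$ is not involutive, $A$ is not Glivenko by Proposition \ref{`glivinv`}.

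I would then let $F$ be the smallest filter containing the idempotent $c$, i.e.\ $F=\{c,1\}$: a proper filter (as $0\notin F$) with $F\not\subseteq Ds(A)$, so the result of \cite{eu3} does not apply. Computing $\leftrightarrow$, the decisive identities are $a\leftrightarrow 0=\neg\,a=c\in F$ and $c\leftrightarrow 1=c\in F$, while $b$ is congruent modulo $F$ to none of $0,a,c,1$; so the congruence classes are $\{0,a\}$, $\{b\}$, $\{c,1\}$, and $A/F$ is the three-element chain $0/F<b/F<1/F$. Since $\neg\,(b/F)=(\neg\,b)/F=a/F=0/F$, the element $b/F$ is dense in $A/F$, so $b/F\in Ds(A/F)$; but $(Ds(A))/F=\{1/F\}$ and $b/F\neq 1/F$. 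Therefore $Ds(A/F)\neq(Ds(A))/F$.

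The only substantive work is routine verification: associativity of $\odot$, the tables for $\rightarrow$ and $\neg\,$, and the three congruence classes — in particular that $c\equiv 1$ and $a\equiv 0$ modulo $F$ while $b$ is isolated. There is no conceptual difficulty; the one genuine point is recognizing at the outset that the example must lie outside the Glivenko class and must carry a proper filter containing a non-dense idempotent of the form $\neg\,\neg\,a$ — which is exactly why the familiar residuated lattices fail.
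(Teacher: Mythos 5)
Your proof is correct, and it follows the same overall strategy as the paper --- exhibit an explicit finite counterexample --- but with different concrete content. The paper simply quotes a six-element (non-chain) residuated lattice from Iorgulescu's catalogue, takes $F=\{d,1\}$, and checks that $b/F$ is dense in $A/F$ while $b/F\notin Ds(A)/F$; it offers no explanation of why such an example must look the way it does. You instead build your own five-element chain from scratch, which obliges you to verify associativity and order-preservation of $\odot$ and to compute the residuum --- all of which checks out ($\neg a=c$, $\neg b=\neg c=a$, $Ds(A)=\{1\}$, $F=\{c,1\}$ a proper filter, congruence classes $\{0,a\}$, $\{b\}$, $\{c,1\}$, and $\neg(b/F)=a/F=0/F$ with $b/F\neq 1/F$). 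What your route buys is the preliminary lemma that Glivenko residuated lattices satisfy $Ds(A/F)=(Ds(A))/F$ for \emph{every} filter $F$ (via the dense element $\neg\,\neg\,a\rightarrow a$); this is not in the paper, it genuinely strengthens the cited result of \cite{eu3} on a different axis (Glivenko hypothesis on $A$ rather than $F\subseteq Ds(A)$), and it explains why the counterexample must avoid MV-, BL- and Heyting algebras. The cost is that the reader must trust your hand-built tables rather than a published example; both are routine to check.
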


\begin{proof}
Let us consider the following example of residuated lattice from \cite[Section 15.2.2]{ior2}: $A=\{0,a,b,c,d,1\}$, with the residuated lattice structure presented below:

\begin{center}
\begin{picture}(80,115)(0,0)
\put(40,15){\circle*{3}}
\put(40,35){\circle*{3}}
\put(40,55){\circle*{3}}
\put(20,75){\circle*{3}}
\put(60,75){\circle*{3}}
\put(40,95){\circle*{3}}
\put(40,15){\line(0,1){40}}
\put(40,55){\line(-1,1){20}}
\put(40,55){\line(1,1){20}}
\put(40,95){\line(-1,-1){20}}
\put(40,95){\line(1,-1){20}}
\put(38,2){$0$}
\put(38,100){$1$}
\put(10,72){$c$}
\put(65,72){$d$}
\put(45,32){$a$}
\put(45,50){$b$}
\end{picture}
\end{center}

\begin{center}
\begin{tabular}{cc}
\begin{tabular}{c|cccccc}
$\rightarrow $ & $0$ & $a$ & $b$ & $c$ & $d$ & $1$ \\ \hline
$0$ & $1$ & $1$ & $1$ & $1$ & $1$ & $1$ \\
$a$ & $d$ & $1$ & $1$ & $1$ & $1$ & $1$ \\
$b$ & $a$ & $a$ & $1$ & $1$ & $1$ & $1$ \\
$c$ & $0$ & $a$ & $d$ & $1$ & $d$ & $1$ \\
$d$ & $a$ & $a$ & $c$ & $c$ & $1$ & $1$ \\

$1$ & $0$ & $a$ & $b$ & $c$ & $d$ & $1$
\end{tabular}
& \hspace*{11pt}
\begin{tabular}{c|cccccc}
$\odot $ & $0$ & $a$ & $b$ & $c$ & $d$ & $1$ \\ \hline
$0$ & $0$ & $0$ & $0$ & $0$ & $0$ & $0$ \\
$a$ & $0$ & $0$ & $0$ & $a$ & $0$ & $a$ \\
$b$ & $0$ & $0$ & $b$ & $b$ & $b$ & $b$ \\
$c$ & $0$ & $a$ & $b$ & $c$ & $b$ & $c$ \\
$d$ & $0$ & $0$ & $b$ & $b$ & $d$ & $d$ \\
$1$ & $0$ & $a$ & $b$ & $c$ & $d$ & $1$
\end{tabular}
\end{tabular}
\end{center}

$Ds(A)=\{c,1\}$. Let $F=\{d,1\}$. $Ds(A)/F=\{c/F,1/F\}$. For example, $b/F\in Ds(A/F)$ (since $\neg \, b/F=a/F=0/F$) and $b/F\notin Ds(A)/F$ (since $b/F\neq c/F$ and $b/F\neq 1/F$).\end{proof}

\begin{proposition}
Let $A$ be a Glivenko residuated lattice and $a\in A$. Then: $a\in {\rm Rad}(A)$ iff $\neg \, \neg \, a\in {\rm Rad}(A)$.
\label{`anegnega`}
\end{proposition}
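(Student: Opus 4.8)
The plan is to transport everything through the isomorphism $\theta $ of Proposition \ref{`dreg`}, reducing the claim to the trivial observation that $\neg \, \neg \, a$ is unchanged by a further double negation. Concretely, I would first establish the intermediate equivalence
\[
a\in {\rm Rad}(A)\quad\Leftrightarrow\quad \neg \, \neg \, a\in {\rm Rad}({\rm Reg}(A)),
\]
valid for every $a\in A$, where ${\rm Rad}({\rm Reg}(A))$ is taken in the involutive residuated lattice $({\rm Reg}(A),\vee ^{*},\wedge ^{*},\odot ^{*},\rightarrow ,0,1)$ furnished by Proposition \ref{`glivnegneg`}. Once this is in hand, the theorem follows by applying it to $a$ and to $\neg \, \neg \, a$ and comparing: the first application gives $a\in {\rm Rad}(A)\Leftrightarrow \neg \, \neg \, a\in {\rm Rad}({\rm Reg}(A))$, the second gives $\neg \, \neg \, a\in {\rm Rad}(A)\Leftrightarrow \neg \, \neg \, \neg \, \neg \, a\in {\rm Rad}({\rm Reg}(A))$, and since $\neg \, \neg \, \neg \, \neg \, a=\neg \, \neg \, a$ by Lemma \ref{`leqres`}, (iii) (applied to $\neg \, a$, or because $\neg \, \neg \, a$ is regular), the two right-hand sides coincide, whence $a\in {\rm Rad}(A)\Leftrightarrow \neg \, \neg \, a\in {\rm Rad}(A)$.

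To prove the intermediate equivalence I would chain the results already at hand. By Proposition \ref{`maxrad`}, parts (iv) and (iii), $a\in {\rm Rad}(A)$ iff $a/Ds(A)\in {\rm Rad}(A)/Ds(A)={\rm Rad}(A/Ds(A))$. By Proposition \ref{`dreg`}, $\theta :A/Ds(A)\rightarrow {\rm Reg}(A)$ is an isomorphism of residuated lattices with $\theta (a/Ds(A))=\neg \, \neg \, a$; since any isomorphism of residuated lattices maps proper filters bijectively onto proper filters and preserves inclusion, it carries ${\rm Max}(A/Ds(A))$ bijectively onto ${\rm Max}({\rm Reg}(A))$, and being a bijection it commutes with arbitrary intersections, so $\theta \big({\rm Rad}(A/Ds(A))\big)={\rm Rad}({\rm Reg}(A))$. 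Feeding $\theta $ into the previous line yields $a\in {\rm Rad}(A)$ iff $\neg \, \neg \, a\in {\rm Rad}({\rm Reg}(A))$, as wanted.

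The one point that needs care is the detour through ${\rm Reg}(A)$: it is \emph{not} a subalgebra of $A$ (its meet, join and product are the starred operations), so ${\rm Rad}({\rm Reg}(A))$ must be read in the involutive residuated lattice $({\rm Reg}(A),\vee ^{*},\wedge ^{*},\odot ^{*},\rightarrow ,0,1)$, and it is the isomorphism $\theta $ — not the mere set inclusion ${\rm Reg}(A)\subseteq A$ — that legitimately transports radicals; I do not expect any further difficulty, and Lemma \ref{`rad`} is not even needed. A variant avoiding ${\rm Reg}(A)$ altogether would note that the surjective morphism $\neg \, \neg \, :A\rightarrow {\rm Reg}(A)$ has kernel $Ds(A)$, since $\neg \, \neg \, a=1$ forces $\neg \, a=\neg \, \neg \, \neg \, a=0$; as $Ds(A)\subseteq {\rm Rad}(A)$, every maximal filter of $A$ contains this kernel, so maximal filters of $A$ correspond bijectively, under preimage, to maximal filters of ${\rm Reg}(A)$, exhibiting ${\rm Rad}(A)$ as the $\neg \, \neg \, $-preimage of ${\rm Rad}({\rm Reg}(A))$ directly. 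I would present the route via Propositions \ref{`maxrad`} and \ref{`dreg`}.
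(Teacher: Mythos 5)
Your proof is correct, but it takes a detour that the paper avoids. The paper's own argument is a three-step chain: by Proposition \ref{`maxrad`}, (iii), $a\in {\rm Rad}(A)$ iff $a/Ds(A)\in {\rm Rad}(A)/Ds(A)$; by Proposition \ref{`glivinv`} the quotient $A/Ds(A)$ is involutive, so $a/Ds(A)=\neg \, \neg \, (a/Ds(A))=(\neg \, \neg \, a)/Ds(A)$ and the membership condition is unchanged when $a$ is replaced by $\neg \, \neg \, a$; and Proposition \ref{`maxrad`}, (iii), again converts back. You instead route everything through ${\rm Reg}(A)$ via the isomorphism $\theta $ of Proposition \ref{`dreg`}, which costs you two extra (routine but unstated-in-the-paper) verifications: that an isomorphism of residuated lattices transports radicals, and the double-application trick with $\neg \, \neg \, \neg \, \neg \, a=\neg \, \neg \, a$. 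Both proofs ultimately rest on the same Glivenko phenomenon ($A/Ds(A)$ involutive, equivalently isomorphic to ${\rm Reg}(A)$ under $\neg \, \neg \,$), and your cautionary remark that ${\rm Reg}(A)$ is not a subalgebra of $A$ (so Lemma \ref{`rad`}, (ii), is unavailable and the radical must be computed in the starred structure) is exactly the right point to flag. What your longer route buys is the intermediate identity $\theta ({\rm Rad}(A/Ds(A)))={\rm Rad}({\rm Reg}(A))$, i.e.\ essentially ${\rm Rad}({\rm Reg}(A))=\neg \, \neg \, ({\rm Rad}(A))$, which is precisely the content of the proposition that immediately follows this one in the paper (${\rm Rad}({\rm Reg}(A))={\rm Rad}(A)\cap {\rm Reg}(A)$) — so your argument proves slightly more, at the price of being less direct for the statement actually asked.
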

\begin{proof}
By using in turn Proposition~\ref{`maxrad`}, (iii), Proposition~\ref{`glivinv`} and again Proposition~\ref{`maxrad`}, (iii), we get: $a\in {\rm Rad}(A)$ iff $a/Ds(A)\in {\rm Rad}(A)/Ds(A)$ iff $\neg \, \neg \, a/Ds(A)\in {\rm Rad}(A)/Ds(A)$ iff $\neg \, \neg \, a\in {\rm Rad}(A)$. Actually, according to Proposition~\ref{`maxrad`}, (iv), it was not necessary for $A$ to be Glivenko; instead, it was sufficient for $A/Ds(A)$ to satisfy the equivalence: $x\in {\rm Rad}(A/Ds(A))$ iff $\neg \, \neg \, x\in {\rm Rad}(A/Ds(A))$.\end{proof}

\begin{proposition}
Let $A$ be a Glivenko residuated lattice. Then:\linebreak ${\rm Rad}({\rm Reg}(A))={\rm Rad}(A)\cap {\rm Reg}(A)$.
\end{proposition}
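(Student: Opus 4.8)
The plan is to exploit the isomorphism $\theta :A/Ds(A)\rightarrow {\rm Reg}(A)$ furnished by Proposition~\ref{`dreg`}, rather than trying to treat ${\rm Reg}(A)$ as a subalgebra of $A$: the latter route fails, because the lattice and monoid operations on ${\rm Reg}(A)$ are the starred ones $\vee ^{*},\wedge ^{*},\odot ^{*}$ and not those of $A$, so Lemma~\ref{`rad`}, (ii), does not apply directly. Since $A$ is Glivenko, ${\rm Reg}(A)$ is indeed a residuated lattice (Proposition~\ref{`glivnegneg`}), so ${\rm Rad}({\rm Reg}(A))$ makes sense; and since $\theta$ is an isomorphism of residuated lattices, it induces a bijection between the maximal filters of $A/Ds(A)$ and those of ${\rm Reg}(A)$, hence it maps ${\rm Rad}(A/Ds(A))$ onto ${\rm Rad}({\rm Reg}(A))$.

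Next I would rewrite ${\rm Rad}(A/Ds(A))={\rm Rad}(A)/Ds(A)$ by Proposition~\ref{`maxrad`}, (iv), and then use the commutativity of the diagram in Proposition~\ref{`dreg`}, namely $\theta \circ p_{A}=\neg \, \neg \, $, to compute
\[
{\rm Rad}({\rm Reg}(A))=\theta \bigl({\rm Rad}(A)/Ds(A)\bigr)=\{\theta (a/Ds(A))\mid a\in {\rm Rad}(A)\}=\{\neg \, \neg \, a\mid a\in {\rm Rad}(A)\}.
\]
It then remains to identify the set $\{\neg \, \neg \, a\mid a\in {\rm Rad}(A)\}$ with ${\rm Rad}(A)\cap {\rm Reg}(A)$. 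For the inclusion $\subseteq$, given $a\in {\rm Rad}(A)$, Lemma~\ref{`leqres`}, (iii), applied to $\neg \, a$ gives $\neg \, \neg \, (\neg \, \neg \, a)=\neg \, \neg \, a$, so $\neg \, \neg \, a\in {\rm Reg}(A)$, while Proposition~\ref{`anegnega`} gives $\neg \, \neg \, a\in {\rm Rad}(A)$. For the inclusion $\supseteq$, if $x\in {\rm Rad}(A)\cap {\rm Reg}(A)$, then $x=\neg \, \neg \, x$ by regularity and $x\in {\rm Rad}(A)$, so $x$ has the required form. This closes the argument.

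I expect the only genuine subtlety to be the very first step: recognising that ${\rm Reg}(A)$ must be accessed through the quotient $A/Ds(A)$ and the isomorphism $\theta $, since the tempting application of the subalgebra result Lemma~\ref{`rad`}, (ii), is not legitimate here. Everything after that is bookkeeping with the commutative triangle of Proposition~\ref{`dreg`} together with the two facts about $\neg \, \neg \, $ and the radical already established in Propositions~\ref{`maxrad`} and~\ref{`anegnega`}.
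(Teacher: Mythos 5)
Your proposal is correct and follows essentially the same route as the paper: both establish $\neg\,\neg\,({\rm Rad}(A))={\rm Rad}(A)\cap {\rm Reg}(A)$ (via Proposition~\ref{`anegnega`} and regularity of double negations) and both obtain ${\rm Rad}({\rm Reg}(A))=\neg\,\neg\,({\rm Rad}(A))$ by transporting the radical through the isomorphism $\theta$ of Proposition~\ref{`dreg`} together with Proposition~\ref{`maxrad`}, (iv). Your opening remark that Lemma~\ref{`rad`}, (ii), is not applicable because ${\rm Reg}(A)$ carries the starred operations rather than the restricted ones is a correct and worthwhile observation, though the paper leaves it implicit.
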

\begin{proof}
Let us first prove that $\neg \, \neg \, ({\rm Rad}(A))={\rm Rad}(A)\cap {\rm Reg}(A)$. For all $a\in A$, $a\in {\rm Rad}(A)\cap {\rm Reg}(A)$ iff $a\in {\rm Rad}(A)$ and $a=\neg \, \neg \, a$, which implies $a\in \neg \, \neg \, ({\rm Rad}(A))$. Let $a\in \neg \, \neg \, ({\rm Rad}(A))$, hence $a=\neg \, \neg \, b$, with $b\in {\rm Rad}(A)$, hence, by Proposition~\ref{`anegnega`}, $\neg \, \neg \, b\in {\rm Rad}(A)$, so $a\in {\rm Rad}(A)$. But $\neg \, \neg \, ({\rm Rad}(A))\subseteq \neg \, \neg \, A={\rm Reg}(A)$, by Proposition~\ref{`glivnegneg`}. So $a\in {\rm Rad}(A)\cap {\rm Reg}(A)$. Hence the desired set equality.

Propositions~\ref{`dreg`} and~\ref{`maxrad`}, (iv), and the observation in the previous paragraph show that: ${\rm Rad}({\rm Reg}(A))={\rm Rad}(\theta (A/Ds(A)))=\theta ({\rm Rad}(A/Ds(A)))=\theta ({\rm Rad}(A)/Ds(A))=\theta (p_{A}({\rm Rad}(A)))=\neg \, \neg \, ({\rm Rad}(A))={\rm Rad}(A)\cap {\rm Reg}(A)$.\end{proof}

\begin{proposition}
Let $A$ be a Glivenko residuated lattice. Then: $A/Ds(A)$ is an MV-algebra
iff $A$ satisfies the equation: $(\neg \, a\rightarrow \neg \, b)\rightarrow \neg \, b=(\neg \, b\rightarrow \neg \, a)\rightarrow \neg \, a$.
\label{ecmv}
\end{proposition}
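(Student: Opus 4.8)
The plan is to combine Proposition~\ref{`dreg`} with Proposition~\ref{regmv}. Proposition~\ref{`dreg`} tells us that, for a Glivenko residuated lattice $A$, the quotient $A/Ds(A)$ is isomorphic (via $\theta$) to ${\rm Reg}(A)$ with its starred operations. Since being an MV-algebra is preserved and reflected under isomorphisms of residuated lattices (an MV-algebra is, up to term-equivalence, an involutive BL-algebra, by the cited \cite[Theorem 3.2, page 99]{pic}, and this is an equational class in the residuated-lattice signature), we get that $A/Ds(A)$ is an MV-algebra iff ${\rm Reg}(A)$ is an MV-algebra. So the proposition reduces immediately to the statement of Proposition~\ref{regmv}, which says precisely that, for $A$ Glivenko, ${\rm Reg}(A)$ is an MV-algebra iff $A$ satisfies $(\neg \, a\rightarrow \neg \, b)\rightarrow \neg \, b=(\neg \, b\rightarrow \neg \, a)\rightarrow \neg \, a$.

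So the proof I would write is short: first invoke Proposition~\ref{`dreg`} to obtain the isomorphism $\theta : A/Ds(A)\rightarrow {\rm Reg}(A)$ of residuated lattices. Then observe that an isomorphism of residuated lattices carries the MV-algebra structure (equivalently, the involutive BL-algebra identities) back and forth, so $A/Ds(A)$ is an MV-algebra iff ${\rm Reg}(A)$ is an MV-algebra. Finally apply Proposition~\ref{regmv} to ${\rm Reg}(A)$ to conclude that this holds iff $A$ satisfies the displayed equation.

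The only subtlety — and the one point I would be careful to address — is the precise sense in which ``$A/Ds(A)$ is an MV-algebra'' is meant. An MV-algebra in the narrow sense is an algebra in the signature $(\oplus,\neg\,,0)$, whereas $A/Ds(A)$ is a residuated lattice; what is meant is that its residuated-lattice reduct is term-equivalent to an MV-algebra, i.e. it is an involutive BL-algebra. Since $\theta$ is an isomorphism of residuated lattices, it preserves all the operations $\vee,\wedge,\odot,\rightarrow,0,1$ and hence the derived operation $\neg\,$, so $A/Ds(A)$ is an involutive BL-algebra iff ${\rm Reg}(A)$ is; and one should note that the $\oplus$ on ${\rm Reg}(A)$ referenced in Proposition~\ref{regmv} is exactly the term-defined operation $\neg\, a\oplus\neg\, b=\neg\,(a\odot^{*}b)$, matching the MV-reduct structure transported by $\theta$. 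Once this identification is made explicit, no real obstacle remains; the whole argument is a two-line chain of ``iff''s, and there is no computation to grind through.

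\begin{proof}
By Proposition~\ref{`dreg`}, since $A$ is Glivenko, $\theta :A/Ds(A)\rightarrow {\rm Reg}(A)$ is an isomorphism of residuated lattices. An isomorphism of residuated lattices preserves and reflects all the operations $\vee ,\wedge ,\odot ,\rightarrow ,0,1$ and hence also the derived operation $\neg \, $, so it preserves and reflects the property of being an involutive BL-algebra; by \cite[Theorem 3.2, page 99]{pic}, this means it preserves and reflects the property of being (term-equivalent to) an MV-algebra. Therefore $A/Ds(A)$ is an MV-algebra iff ${\rm Reg}(A)$ is an MV-algebra, where on ${\rm Reg}(A)$ we use the starred operations, and the operation $\oplus $ is the one transported by $\theta $, namely $\neg \, a\oplus \neg \, b=\neg \, (a\odot ^{*}b)$ for $a,b\in {\rm Reg}(A)$. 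By Proposition~\ref{regmv}, ${\rm Reg}(A)$ (with this $\oplus $) is an MV-algebra iff $A$ satisfies $(\neg \, a\rightarrow \neg \, b)\rightarrow \neg \, b=(\neg \, b\rightarrow \neg \, a)\rightarrow \neg \, a$. Combining the two equivalences yields the claim.
\end{proof}
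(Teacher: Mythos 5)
Your proof is correct and is exactly the paper's argument: the paper's own proof reads, in its entirety, ``By Propositions \ref{`dreg`} and \ref{regmv}.'' You have merely made explicit the (routine) transport of the MV-algebra structure along the isomorphism $\theta$, which the paper leaves implicit.
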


\begin{proof}
By Propositions \ref{`dreg`} and \ref{regmv}.\end{proof}

\begin{corollary}
For any BL-algebra $A$, $A/Ds(A)$ is an MV-algebra.
\label{abladamv}
\end{corollary}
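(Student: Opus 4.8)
The plan is to deduce the corollary from Proposition~\ref{ecmv}: a BL-algebra is in particular a residuated lattice, so it suffices to show that every BL-algebra $A$ is Glivenko and satisfies the equation $(\neg \, a\rightarrow \neg \, b)\rightarrow \neg \, b=(\neg \, b\rightarrow \neg \, a)\rightarrow \neg \, a$; Proposition~\ref{ecmv} then immediately gives that $A/Ds(A)$ is an MV-algebra. Both of these are properties of $A$ expressible by identities (the Glivenko condition being $\neg \, \neg \, (\neg \, \neg \, x\rightarrow x)=1$), hence preserved under subdirect products; since every BL-algebra is a subdirect product of totally ordered BL-algebras, it is enough to verify both statements in an arbitrary BL-chain $C$.

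To handle a BL-chain $C$, I would use its ordinal-sum decomposition: $C$ is an ordinal sum of totally ordered Wajsberg hoops whose first component $C_{0}$ is bounded, hence an MV-chain, with $0,1\in C_{0}$ and $C_{0}$ an initial segment of $C$ closed under all the operations. A short computation with the ordinal-sum rules for $\rightarrow $ shows that $\neg \, x=x\rightarrow 0\in C_{0}$ for every $x\in C$, that $\neg \, \neg \, x=x$ for $x\in C_{0}$, and that $\neg \, \neg \, x=1$ for $x\notin C_{0}$ (equivalently, ${\rm Reg}(C)=C_{0}$). The Glivenko identity is then immediate: if $x\in C_{0}$ then $\neg \, \neg \, x\rightarrow x=1$, while if $x\notin C_{0}$ then $\neg \, \neg \, x=1$, so $\neg \, \neg \, x\rightarrow x=x$ and $\neg \, \neg \, (\neg \, \neg \, x\rightarrow x)=\neg \, \neg \, x=1$; in either case $\neg \, \neg \, (\neg \, \neg \, x\rightarrow x)=1$. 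For the equation, since $\neg \, a,\neg \, b\in C_{0}$ and $C_{0}$ is closed under $\rightarrow $ inside $C$, both sides are computed entirely within the MV-chain $C_{0}$, where they both equal $\neg \, a\vee \neg \, b$ by the standard MV-algebra identity $(u\rightarrow v)\rightarrow v=u\vee v=(v\rightarrow u)\rightarrow u$; hence the equation holds in $C$.

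Alternatively, one can shortcut the chain argument by invoking the classical fact that for every BL-algebra $A$ the map $\neg \, \neg \, :A\rightarrow {\rm Reg}(A)$ is a surjective morphism of residuated lattices onto the involutive residuated lattice ${\rm Reg}(A)$, which is moreover an MV-algebra with the operations of Proposition~\ref{regmv}: the first part yields that $A$ is Glivenko by the characterization in Proposition~\ref{`glivnegneg`}, and the second part yields the required equation by Proposition~\ref{regmv}; Proposition~\ref{ecmv} then concludes.

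The main obstacle — in fact the only point requiring care — is the bookkeeping for the ordinal-sum decomposition of a BL-chain: one must check that $\neg \, x$ and $\neg \, \neg \, x$ always land in the bottom MV-component $C_{0}$ and that the subterms occurring in the equation never leave $C_{0}$, after which both verifications reduce to routine facts about MV-chains.
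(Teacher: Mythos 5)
Your proposal is correct, and its top-level structure is exactly that of the paper: reduce Corollary~\ref{abladamv} to Proposition~\ref{ecmv} by checking that every BL-algebra is Glivenko and satisfies $(\neg \, a\rightarrow \neg \, b)\rightarrow \neg \, b=(\neg \, b\rightarrow \neg \, a)\rightarrow \neg \, a$. The difference lies entirely in how those two hypotheses are justified. The paper simply cites Cignoli--Torrens for both facts (your ``alternative shortcut'' is essentially the paper's proof), whereas you verify them structurally: pass to subdirectly irreducible factors via prelinearity, then use the Agliano--Montagna ordinal-sum decomposition of a BL-chain into Wajsberg hoops with a bounded (MV) first component $C_{0}$, check that $\neg$ always lands in $C_{0}$, and reduce both identities to the MV law $(u\rightarrow v)\rightarrow v=u\vee v$. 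Your component bookkeeping is right: for $x\notin C_{0}$ the ordinal-sum rule gives $x\rightarrow 0=0$, so $\neg \, x=0$ and $\neg \, \neg \, x=1$, and the two case analyses for the Glivenko identity and the closure of $C_{0}$ under $\rightarrow$ go through. What your route buys is a self-contained verification that makes visible \emph{why} BL-algebras have these properties (everything happens in the bottom MV-component); what it costs is reliance on the ordinal-sum decomposition theorem, which is itself a substantial imported result, so the argument is not really more elementary than the paper's citations --- just differently sourced.
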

\begin{proof}
Any BL-algebra is Glivenko (see \cite[page 164]{cito2}) and satisfies the equation from Proposition \ref{ecmv} (see \cite[page 120]{cito1}).\end{proof}

In the following we will give an example of an MTL-algebra $A$ such that $A/Ds(A)$ is not an MV-algebra, actually not even a BL-algebra.

An MTL-algebra $A$ that is not a BL-algebra and in which $Ds(A)=\{1\}$ will provide us with the desired example.

\begin{example}
We are using an example in \cite[Section 14.1.2]{ior2}.

Let $A=\{0,a,b,c,d,1\}$, with $0<a<b<c<d<1$ and the following operations:

\begin{center}
\begin{tabular}{cc}
\begin{tabular}{c|cccccc}
$\rightarrow $ & $0$ & $a$ & $b$ & $c$ & $d$ & $1$ \\ \hline
$0$ & $1$ & $1$ & $1$ & $1$ & $1$ & $1$ \\
$a$ & $d$ & $1$ & $1$ & $1$ & $1$ & $1$ \\
$b$ & $c$ & $c$ & $1$ & $1$ & $1$ & $1$ \\
$c$ & $b$ & $b$ & $c$ & $1$ & $1$ & $1$ \\
$d$ & $a$ & $a$ & $b$ & $c$ & $1$ & $1$ \\
$1$ & $0$ & $a$ & $b$ & $c$ & $d$ & $1$
\end{tabular}
&
\begin{tabular}{c|cccccc}
$\odot $ & $0$ & $a$ & $b$ & $c$ & $d$ & $1$ \\ \hline
$0$ & $0$ & $0$ & $0$ & $0$ & $0$ & $0$ \\
$a$ & $0$ & $0$ & $0$ & $0$ & $0$ & $a$ \\
$b$ & $0$ & $0$ & $0$ & $0$ & $b$ & $b$ \\
$c$ & $0$ & $0$ & $0$ & $b$ & $c$ & $c$ \\
$d$ & $0$ & $0$ & $b$ & $c$ & $d$ & $d$ \\
$1$ & $0$ & $a$ & $b$ & $c$ & $d$ & $1$
\end{tabular}
\end{tabular}
\end{center}

With these operations, $A$ becomes an MTL-algebra (even an IMTL-algebra) that is not a BL-algebra. Indeed, $b\wedge a=a\neq 0=b\odot (b\rightarrow a)$, so $A$ does not satisfy the divisibility equation. As one can see, $Ds(A)=\{1\}$, so $A/Ds(A)\cong A$, which is not a BL-algebra.
\end{example}

\section{Lifting Boolean Center}
\label{liftprop}

\hspace*{11pt} In this section we collect several results about residuated lattices with lifting Boolean center.

In what follows, we recall the definition of the residuated lattices with lifting Boolean center, that we gave in \cite{eu3}.

In the following, unless mentioned otherwise, let $A$ be a residuated lattice and let us consider the commutative diagram below, where, for all $a\in A$, $p_{A}(a)=a/Ds(A)$, $r_{A}(a)=a/{\rm Rad}(A)$ (the canonical surjections) and $\phi _{A}(a/Ds(A))=a/{\rm Rad}(A)$. Since $Ds(A)\subseteq {\rm Rad}(A)$, we have that $\phi _{A}$ is well defined and it is a morphism of residuated lattices.

\begin{center}
\begin{picture}(70,70)(0,0)
\put(2,43){$A$}
\put(12,48){\vector(1,0){38}}
\put(27,51){$p_{A}$}
\put(53,43){$A/Ds(A)$}
\put(56,40){\vector(0,-1){26}}
\put(59,25){$\phi _{A}$}
\put(10,40){\vector(4,-3){40}}
\put(14,20){$r_{A}$}
\put(53,2){$A/{\rm Rad}(A)$}
\end{picture}
\end{center}

As we have seen in Section \ref{prelim1}, it follows that we have the commutative diagram below in the category of Boolean algebras.

\begin{center}
\begin{picture}(70,70)(0,0)
\put(2,43){$B(A)$}
\put(32,48){\vector(1,0){38}}
\put(34,51){$B(p_{A})$}
\put(73,43){$B(A/Ds(A))$}
\put(76,40){\vector(0,-1){26}}
\put(79,25){$B(\phi _{A})$}
\put(30,40){\vector(4,-3){40}}
\put(14,20){$B(r_{A})$}
\put(73,2){$B(A/{\rm Rad}(A))$}
\end{picture}
\end{center}

\begin{lemma}{\rm \cite{eu3}} $B(p_{A})$ and $B(r_{A})$ are injective.
\end{lemma}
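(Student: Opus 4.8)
The plan is to prove injectivity of $B(p_A):B(A)\to B(A/Ds(A))$ and of $B(r_A):B(A)\to B(A/{\rm Rad}(A))$ directly, by showing that each of these maps has trivial kernel in the appropriate sense — namely, that if $e\in B(A)$ and $e/Ds(A)=1/Ds(A)$ (respectively $e/{\rm Rad}(A)=1/{\rm Rad}(A)$), then $e=1$. Since $B(p_A)$ and $B(r_A)$ are morphisms of Boolean algebras, injectivity is equivalent to this kernel-triviality condition: if $B(p_A)(e_1)=B(p_A)(e_2)$ with $e_1,e_2\in B(A)$, then $B(p_A)(e_1\leftrightarrow e_2)=1$, and $e_1\leftrightarrow e_2$ lies in $B(A)$ (the Boolean center is closed under the residuated-lattice operations); so it suffices to handle the case of an element mapping to $1$. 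Because $Ds(A)\subseteq{\rm Rad}(A)$, the map $B(p_A)$ factors $B(r_A)$ through $B(\phi_A)$, so it is actually enough to prove $B(p_A)$ injective: injectivity of $B(r_A)$ would then follow once we know $B(\phi_A)$ is injective — but it is cleaner to prove both kernel conditions in parallel, since the argument is the same.

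So the key step is: let $e\in B(A)$ with $e\leftrightarrow 1\in F$, where $F$ is either $Ds(A)$ or ${\rm Rad}(A)$; show $e=1$. Now $e\leftrightarrow 1=(e\rightarrow 1)\wedge(1\rightarrow e)=1\wedge e=e$, so the hypothesis simply says $e\in F$. If $F=Ds(A)$, then $\neg\,e=0$; but $e\in B(A)$, so by Proposition~\ref{B(A)=}, $e\vee\neg\,e=1$, whence $e=e\vee 0=e\vee\neg\,e=1$. If $F={\rm Rad}(A)$, then by Lemma~\ref{`rad`}(i) applied to $e$, for every $n\in{\rm I\!N}^{*}$ there is $m$ with $\neg\,((\neg\,(e^n))^m)=1$; but by Proposition~\ref{car-B(A)}, $e\odot e=e$, so $e^n=e$ for all $n\geq 1$, and similarly $\neg\,e\in B(A)$ (its complement), so $(\neg\,e)^m=\neg\,e$; thus the condition reduces to $\neg\,\neg\,e=1$, i.e. $e=\neg\,\neg\,e=1$ using $e=\neg\,\neg\,e$ from Proposition~\ref{car-B(A)}. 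In both cases $e=1$, so the kernel is trivial and the maps are injective.

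I expect the main (very mild) obstacle to be the bookkeeping that reduces injectivity of a Boolean-algebra morphism to the kernel-triviality statement: one must note that $B(A)$, being closed under $\rightarrow$ and $\wedge$, is closed under $\leftrightarrow$, so that $e_1\leftrightarrow e_2\in B(A)$ whenever $e_1,e_2\in B(A)$, and that $e_1/F=e_2/F$ is equivalent to $e_1\leftrightarrow e_2\in F$ by definition of the congruence modulo $F$. After that reduction, the core computations — $e\vee\neg\,e=1$ forcing $e=1$ when $\neg\,e=0$, and idempotency $e^n=e$ collapsing the radical characterization to $\neg\,\neg\,e=1$ — are immediate from Propositions~\ref{car-B(A)} and~\ref{B(A)=} and Lemma~\ref{`rad`}. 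Alternatively, and perhaps more in the spirit of the paper, one could observe that $Ds(A)\cap B(A)=\{1\}$ and ${\rm Rad}(A)\cap B(A)=\{1\}$ as the single substantive lemma, and then injectivity of $B(p_A)$, $B(r_A)$ is a formal consequence; I would likely present it this way.
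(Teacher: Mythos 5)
Your proof is correct; note that the paper itself gives no argument for this lemma (it is only cited from \cite{eu3}), so there is nothing internal to compare against, but your route is the standard one. The substantive content is exactly the observation that $B(A)\cap Ds(A)=B(A)\cap {\rm Rad}(A)=\{1\}$ (via $e\vee\neg\,e=1$ together with $\neg\,e=0$ in the dense case, and via idempotence of Boolean elements collapsing the radical characterization of Lemma~\ref{`rad`}(i) to $\neg\,\neg\,e=1$ in the radical case), and your reduction of injectivity of a Boolean-algebra morphism to this kernel-triviality is legitimate since $B(A)$ is closed under the relevant operations and $g/F=1/F$ iff $g\in F$. One tiny remark: since $B(r_{A})=B(\phi_{A})\circ B(p_{A})$, injectivity of $B(r_{A})$ alone already implies that of $B(p_{A})$, so the radical computation suffices on its own.
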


\begin{definition}
$A$ has {\em lifting Boolean center} iff $B(r_{A})$ is surjective (and hence a Boolean isomorphism).
\end{definition}

\begin{lemma}{\rm \cite{eu3}} If $A$ has lifting Boolean center then $B(\phi _{A})$ is surjective.
\label{lpphi}
\end{lemma}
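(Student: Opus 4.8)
The plan is to exploit the commutative triangle of Boolean algebra morphisms obtained by applying the functor $B$ to the commutative triangle $\phi _{A}\circ p_{A}=r_{A}$ of residuated lattice morphisms. Since $B$ is a covariant functor, this yields $B(\phi _{A})\circ B(p_{A})=B(r_{A})$, exactly the second diagram displayed above. The argument then reduces to the elementary fact that, whenever a composite of two functions is surjective, the outer (last-applied) function is surjective.

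Concretely, I would proceed as follows. First, record the identity $B(\phi _{A})\circ B(p_{A})=B(r_{A})$, justified by functoriality of $B$ and the commutativity of the first displayed triangle. Next, invoke the hypothesis: $A$ has lifting Boolean center, so by definition $B(r_{A})$ is surjective. Now fix an arbitrary element $y\in B(A/{\rm Rad}(A))$. By surjectivity of $B(r_{A})$, there is some $e\in B(A)$ with $B(r_{A})(e)=y$. Set $f=B(p_{A})(e)\in B(A/Ds(A))$; then, using the composite identity, $B(\phi _{A})(f)=B(\phi _{A})(B(p_{A})(e))=B(r_{A})(e)=y$. Hence every element of $B(A/{\rm Rad}(A))$ is in the image of $B(\phi _{A})$, i.e. $B(\phi _{A})$ is surjective.

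There is essentially no substantive obstacle here: the only point requiring care is making sure the orientation of the arrows in the second diagram is the one produced by the covariant functor $B$ applied to $p_{A}$, $\phi _{A}$, $r_{A}$, so that the relevant composite really is $B(\phi _{A})\circ B(p_{A})$ and not the reverse; this is guaranteed by the discussion in Section \ref{prelim1} that $B$ is a covariant functor. Everything else is the one-line diagram chase above, and no properties of $Ds(A)$, ${\rm Rad}(A)$, or the Glivenko condition are needed.
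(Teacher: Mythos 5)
Your proof is correct: the paper states this lemma without proof (citing \cite{eu3}), and your argument --- apply the covariant functor $B$ to the commutative triangle $\phi_{A}\circ p_{A}=r_{A}$ to get $B(\phi_{A})\circ B(p_{A})=B(r_{A})$, then use that the outer map of a surjective composite is surjective --- is exactly the natural one-line diagram chase that establishes it. Nothing further is needed.
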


\begin{proposition}
If $A$ is Glivenko, then: $A$ has lifting Boolean center iff $B(\phi _{A})$ is surjective.
\label{lpiffsurj}
\end{proposition}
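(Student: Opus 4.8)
The forward implication is exactly Lemma~\ref{lpphi}, which does not even use the Glivenko hypothesis; so the content of the statement lies in the converse, and from now on I assume $A$ Glivenko with $B(\phi_A)$ surjective. The plan is to apply the functor $B$ to the commutative triangle $r_A=\phi_A\circ p_A$, obtaining $B(r_A)=B(\phi_A)\circ B(p_A)$ in the category of Boolean algebras. Since $B(\phi_A)$ is surjective, it suffices to prove that $B(p_A)$ is surjective: then $B(r_A)$ is a composite of surjective Boolean morphisms, hence surjective, which is precisely the statement that $A$ has lifting Boolean center. (Together with the injectivity of $B(p_A)$ recalled above, this says that $B(p_A)$ is a Boolean isomorphism.) So everything is reduced to the claim that \emph{$B(p_A)$ is surjective whenever $A$ is Glivenko} --- this is where the hypothesis must be spent.

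To prove that claim I would transport it along the isomorphism $\theta:A/Ds(A)\to{\rm Reg}(A)$ of Proposition~\ref{`dreg`}, which satisfies $\theta\circ p_A=\neg\neg$. Since $B(\theta)$ is an isomorphism of Boolean algebras and $B(\neg\neg)=B(\theta)\circ B(p_A)$, we have that $B(p_A)$ is surjective iff $B(\neg\neg):B(A)\to B({\rm Reg}(A))$ is surjective. Now for $e\in B(A)$ one has $\neg\neg e=e$ by Proposition~\ref{car-B(A)}, and a direct check from Proposition~\ref{B(A)=} gives the inclusion $B(A)\subseteq B({\rm Reg}(A))$ (if $e\vee\neg e=1$ then $e\vee^{*}\neg e=\neg\neg 1=1$, and $\neg$ is also the negation of the involutive residuated lattice ${\rm Reg}(A)$, by Proposition~\ref{`glivnegneg`}). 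Hence the surjectivity of $B(\neg\neg)$ is equivalent to the reverse inclusion $B({\rm Reg}(A))\subseteq B(A)$; i.e. we need the general-Glivenko analogue of Proposition~\ref{B(A)=B(MV(A))}, whose proof there invokes $B({\rm Reg}(A))\subseteq B(A)$ through the identity $\vee^{*}=\vee$, valid only for MTL-algebras.

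So the true obstacle is this: given a Glivenko residuated lattice $A$ and $e\in B({\rm Reg}(A))$, to show that $e\in B(A)$. The complement of $e$ in ${\rm Reg}(A)$ is $\neg e$ (the analogue of \cite[Lemma~1.13]{pic} for the involutive residuated lattice ${\rm Reg}(A)$). From $e\wedge^{*}\neg e=0$, that is $\neg\neg(e\wedge\neg e)=0$, together with $e\wedge\neg e\leq\neg\neg(e\wedge\neg e)$ (Lemma~\ref{`leqres`}(ii)), one gets $e\wedge\neg e=0$ in $A$ for free. What remains --- and this is the delicate step --- is to upgrade the datum $e\vee^{*}\neg e=1$, that is $\neg\neg(e\vee\neg e)=1$ (equivalently $e\vee\neg e\in Ds(A)$), to the genuine equality $e\vee\neg e=1$ in $A$, after which $e\in B(A)$ follows from Proposition~\ref{B(A)=}. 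I would try to extract this from the extra structure carried by $e$ as a Boolean element of the involutive residuated lattice ${\rm Reg}(A)$ --- in particular $e\odot^{*}e=e$, i.e. $\neg\neg(e\odot e)=e$, from Proposition~\ref{car-B(A)} applied to ${\rm Reg}(A)$ --- together with the rules of calculus for $\neg$, $\odot$, $\vee$ in $A$; I expect this single inference to absorb most of the work.

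Once $B({\rm Reg}(A))=B(A)$ has been established, the argument closes immediately: $B(p_A)$ is a Boolean isomorphism, so $B(r_A)=B(\phi_A)\circ B(p_A)$ is surjective, i.e. $A$ has lifting Boolean center; together with Lemma~\ref{lpphi} this gives the equivalence. Finally, I note that by Proposition~\ref{`maxrad`}(iv) and the isomorphism $A/{\rm Rad}(A)\cong(A/Ds(A))/({\rm Rad}(A)/Ds(A))$, the condition that $B(\phi_A)$ be surjective coincides with the condition that $A/Ds(A)$ have lifting Boolean center; thus the proposition may be reread, for Glivenko $A$, as: $A$ has lifting Boolean center iff $A/Ds(A)$ does --- but this reformulation rests on exactly the same surjectivity of $B(p_A)$.
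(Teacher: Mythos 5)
Your reduction is exactly the one the paper uses: the forward direction is Lemma~\ref{lpphi}, and for the converse you factor $B(r_A)=B(\phi_A)\circ B(p_A)$ and transport $B(p_A)$ along $B(\theta)$ to $B(\neg\,\neg\,):B(A)\rightarrow B({\rm Reg}(A))$, so that everything hinges on the inclusion $B({\rm Reg}(A))\subseteq B(A)$. But you do not prove that inclusion; you only announce a plan (``I would try to extract this\dots I expect this single inference to absorb most of the work''). That is a genuine gap, and you have located it precisely where the paper's own proof is weakest: there the step is dispatched by the bare assertion ``let $a\in B({\rm Reg}(A))$, so $a\in B(A)$'', with no justification beyond what is available for Glivenko MTL-algebras (Proposition~\ref{B(A)=B(MV(A))}), where $\vee^{*}=\vee$.

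Worse, the step you flag as delicate cannot be carried out for general Glivenko residuated lattices, so the sketched plan cannot succeed. Take $A$ to be the five-element Heyting algebra ($\odot=\wedge$) obtained by adding a new top $1$ above the four-element Boolean lattice $\{0,a,b,d\}$ with atoms $a,b$ and $a\vee b=d$. Every Heyting algebra is Glivenko; here $\neg\,a=b$, $\neg\,b=a$, $\neg\,d=0$, so ${\rm Reg}(A)=\{0,a,b,1\}$ with $a\vee^{*}b=\neg\,\neg\,d=1$, i.e.\ ${\rm Reg}(A)$ is the four-element Boolean algebra and $B({\rm Reg}(A))={\rm Reg}(A)$, whereas $B(A)=\{0,1\}$ because $a\vee\neg\,a=d\neq 1$. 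Thus $e=a$ satisfies $e\wedge\neg\,e=0$ and $\neg\,\neg\,(e\vee\neg\,e)=1$ but not $e\vee\neg\,e=1$: the upgrade you hope to extract from $e\odot^{*}e=e$ and the rules of calculus is simply false. In this example $Ds(A)={\rm Rad}(A)=\{d,1\}$, so $\phi_A$ is the identity and $B(\phi_A)$ is surjective, yet $B(r_A):\{0,1\}\rightarrow B(A/{\rm Rad}(A))$ is not; so the counterexample defeats not only this proof strategy but the proposition itself as stated. Under the additional hypothesis of prelinearity (Glivenko MTL-algebras), $\vee^{*}=\vee$ gives $B({\rm Reg}(A))\subseteq B(A)$ at once and your argument, like the paper's, closes correctly.
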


\begin{proof}
The direct implication results from Lemma \ref{lpphi}. 

For the converse implication let us notice that, if $A$ is Glivenko, then in the following commutative diagram $\theta $ is an isomorphism of residuated lattices, as we have seen in Proposition \ref{`dreg`}:

\begin{center}
\begin{picture}(70,70)(0,0)
\put(2,43){$A$}
\put(12,48){\vector(1,0){38}}
\put(27,51){$p_{A}$}
\put(53,43){$A/Ds(A)$}
\put(56,40){\vector(0,-1){26}}
\put(59,25){$\theta $}
\put(10,40){\vector(4,-3){40}}
\put(10,20){$\neg \, \neg \, $}
\put(53,2){${\rm Reg}(A)$}
\end{picture}
\end{center}

In the category of Boolean algebras we have the following commutative diagram:

\begin{center}
\begin{picture}(170,70)(0,0)
\put(20,53){$B(A)$}
\put(40,50){\vector(0,-1){20}}
\put(0,38){$B(\neg \, \neg \, )$}
\put(145,38){$B(\phi _{A})$}
\put(15,18){$B({\rm Reg}(A))$}
\put(46,58){\vector(1,0){90}}
\put(75,61){$B(r_{A})$}
\put(139,53){$B(A/{\rm Rad}(A))$}
\put(139,18){$B(A/Ds(A))$}
\put(136,22){\vector(-1,0){69}}
\put(142,30){\vector(0,1){20}}
\put(90,9){$B(\theta )$}
\put(47,53){\vector(3,-1){90}}
\put(98,40){$B(p_{A})$}
\end{picture}
\end{center}

Proposition \ref{car-B(A)} says that, for all $a\in B(A)$, $a=\neg \, \neg \, a$. Let $a,b\in B(A)$ such that $\neg \, \neg \, a=\neg \, \neg \, b$. This is equivalent to $a=b$. So $B(\neg \, \neg \, )$ is injective. Now let $a\in B({\rm Reg}(A))$, so $a\in B(A)$, hence $a=\neg \, \neg \, a=B(\neg \, \neg \, )(a)$, therefore $B(\neg \, \neg \, )$ is surjective. So $B(\neg \, \neg \, )$ is a Boolean isomorphism. $B(\theta )$ is also an isomorphism, since $\theta $ is an isomorphism. Hence $B(p_{A})$ is an isomorphism, so if $B(\phi _{A})$ is surjective then $B(r_{A})$ is surjective, so that $A$ has lifting Boolean center.\end{proof}

\begin{proposition}{\rm \cite{eu3}} $A/Ds(A)$ has lifting Boolean center iff $B(\phi _{A})$ is a Boolean isomorphism.
\label{lpiffisom}
\end{proposition}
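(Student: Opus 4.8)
The plan is to reduce the lifting property of $B:=A/Ds(A)$ --- which by definition asks whether $B(r_{B})$ is surjective, $r_{B}\colon B\rightarrow B/{\rm Rad}(B)$ being the canonical surjection --- to a statement about the concrete map $\phi _{A}$, by identifying $B/{\rm Rad}(B)$ with $A/{\rm Rad}(A)$ compatibly with $\phi _{A}$. The ingredients are Proposition~\ref{`maxrad`} (parts (iii) and (iv)) and the Lemma that $B(p_{A})$ and $B(r_{A})$ are injective, the latter applied with $B$ in the role of $A$.

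First I would produce an isomorphism of residuated lattices $\beta \colon A/{\rm Rad}(A)\rightarrow B/{\rm Rad}(B)$ with $r_{B}=\beta \circ \phi _{A}$. By Proposition~\ref{`maxrad`}, (iv), ${\rm Rad}(B)={\rm Rad}(A)/Ds(A)$, so the third isomorphism theorem for residuated lattices gives the isomorphism $\beta $ with $\beta (a/{\rm Rad}(A))=(a/Ds(A))/({\rm Rad}(A)/Ds(A))$ for $a\in A$; alternatively, $r_{B}\circ p_{A}\colon A\rightarrow B/{\rm Rad}(B)$ is a surjection whose kernel filter $p_{A}^{-1}({\rm Rad}(B))$ equals ${\rm Rad}(A)$ by Proposition~\ref{`maxrad`}, (iv) and (iii), hence it factors through $r_{A}$ via $\beta $. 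A one-line computation on a representative $a\in A$ gives $\beta (\phi _{A}(a/Ds(A)))=\beta (a/{\rm Rad}(A))=(a/Ds(A))/{\rm Rad}(B)=r_{B}(a/Ds(A))$, so $r_{B}=\beta \circ \phi _{A}$.

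Applying the Boolean center functor yields $B(r_{B})=B(\beta )\circ B(\phi _{A})$ with $B(\beta )$ a Boolean isomorphism; since $B(r_{B})$ is injective by the Lemma, so is $B(\phi _{A})$. Hence $A/Ds(A)$ has lifting Boolean center $\Leftrightarrow $ $B(r_{B})$ is surjective $\Leftrightarrow $ $B(\phi _{A})$ is surjective $\Leftrightarrow $ $B(\phi _{A})$ is a Boolean isomorphism, the last equivalence because $B(\phi _{A})$ is already injective. The only step that needs real care is the construction of $\beta $ and the verification $r_{B}=\beta \circ \phi _{A}$; the rest is a formal diagram chase. Note that, unlike in Proposition~\ref{lpiffsurj}, no Glivenko hypothesis is needed --- indeed the argument shows incidentally that $B(\phi _{A})$ is injective for every residuated lattice $A$.
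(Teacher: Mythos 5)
Your proof is correct. Note that the paper itself offers no proof of this proposition --- it is quoted from the reference \cite{eu3} ("Maximal Residuated Lattices with Lifting Boolean Center") --- so there is no in-paper argument to compare against; but your derivation is a valid, self-contained one using only facts available here. The two load-bearing steps both check out: the identification $\beta\colon A/{\rm Rad}(A)\rightarrow (A/Ds(A))/{\rm Rad}(A/Ds(A))$ is legitimate because ${\rm Rad}(A/Ds(A))={\rm Rad}(A)/Ds(A)$ by Proposition \ref{`maxrad`}, (iv), and the kernel filter of $r_{A/Ds(A)}\circ p_{A}$ is ${\rm Rad}(A)$ by part (iii) (the filter congruence of $f^{-1}(1)$ coincides with the kernel congruence of $f$ for residuated lattice morphisms, so the isomorphism theorems apply in the expected form); and the injectivity of $B(\phi_{A})$ follows from the factorization $B(r_{A/Ds(A)})=B(\beta)\circ B(\phi_{A})$ together with the cited Lemma applied to the residuated lattice $A/Ds(A)$. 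Your closing observation that $B(\phi_{A})$ is injective for every residuated lattice, with no Glivenko hypothesis, is also correct and is implicitly what makes the statement "isomorphism" rather than merely "surjection."
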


\begin{proposition}
Let $A$ be a Glivenko residuated lattice. Then: if $A/Ds(A)$ has lifting Boolean center then $A$ has lifting Boolean center.
\label{glivlp}
\end{proposition}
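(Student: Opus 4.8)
The plan is to bypass any direct manipulation of the functors $B(p_A)$, $B(r_A)$, $B(\phi_A)$ and instead chain together the two characterizations of lifting Boolean center already established, namely Proposition~\ref{lpiffsurj} and Proposition~\ref{lpiffisom}. Both of these carry the real structural content (the former via the isomorphism $\theta\colon A/Ds(A)\to {\rm Reg}(A)$ and the identification $B(p_A)$ as an isomorphism, the latter via the general fact that $B(p_A)$, $B(r_A)$ are injective), so the argument for the present statement should be a two-line deduction.

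First I would apply Proposition~\ref{lpiffisom}, which holds for an arbitrary residuated lattice: the hypothesis that $A/Ds(A)$ has lifting Boolean center is equivalent to $B(\phi_A)$ being a Boolean isomorphism, hence in particular $B(\phi_A)$ is surjective. Then, invoking the hypothesis that $A$ is Glivenko, Proposition~\ref{lpiffsurj} becomes available and states that $A$ has lifting Boolean center if and only if $B(\phi_A)$ is surjective. Putting the two steps together yields that $A$ has lifting Boolean center, which is the claim. Schematically: $A/Ds(A)$ has lifting Boolean center $\Rightarrow$ $B(\phi_A)$ is a Boolean isomorphism $\Rightarrow$ $B(\phi_A)$ is surjective $\Rightarrow$ ($A$ being Glivenko) $A$ has lifting Boolean center.

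There is essentially no obstacle here; the only point requiring a moment's care is the role of the Glivenko hypothesis. It is genuinely needed, since Proposition~\ref{lpiffsurj} is stated only for Glivenko $A$ (its proof uses that $\theta$ is an isomorphism and hence that $B(p_A)$ is an isomorphism), whereas Proposition~\ref{lpiffisom} is unconditional. So the Glivenko assumption is exactly what licenses the passage from surjectivity of $B(\phi_A)$ back to surjectivity of $B(r_A)$, i.e.\ to lifting Boolean center of $A$ itself. One might additionally remark that the converse implication is a separate matter, but it is not part of this statement and need not be addressed in the proof.
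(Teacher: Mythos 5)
Your proof is correct and follows exactly the route the paper takes: the paper's own proof is simply ``By Propositions \ref{lpiffisom} and \ref{lpiffsurj}'', i.e.\ the same two-step chain you describe. Your additional remark about where the Glivenko hypothesis enters is accurate and consistent with the paper's use of Proposition \ref{lpiffsurj}.
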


\begin{proof}
By Propositions \ref{lpiffisom} and \ref{lpiffsurj}.\end{proof}

\begin{proposition}{\rm \cite[Proposition 5]{figele}} Any MV-algebra has lifting Boolean center.

\label{mvlp}
\end{proposition}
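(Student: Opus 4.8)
The plan is to reduce the assertion to a lifting problem modulo the radical and then use the structure of the maximal quotients of an MV-algebra. First observe that an MV-algebra $A$ is involutive, hence Glivenko, and that $Ds(A)=\{1\}$ (if $\neg \, a=0$ then $a=\neg \, \neg \, a=\neg \, 0=1$); thus $A/Ds(A)\cong A$ and Propositions~\ref{lpiffsurj}--\ref{glivlp} degenerate to content-free statements in this case, so no shortcut through that machinery is available. Put $R={\rm Rad}(A)$. Since $B(r_A)$ is injective by \cite{eu3}, the only thing to prove is that $B(r_A):B(A)\rightarrow B(A/R)$ is surjective, i.e. that every Boolean element of $A/R$ lifts to a Boolean element of $A$.

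Fix $\bar e\in B(A/R)$ and pick $x\in A$ with $x/R=\bar e$. For each $M\in {\rm Max}(A)$ we have $R\subseteq M$, so $A/M$ is a quotient of $A/R$, and since $B$ is a functor the image of $\bar e$ is a Boolean element of the \emph{simple} MV-algebra $A/M$; a simple MV-algebra has no Boolean element besides $0$ and $1$ (a nontrivial Boolean element $e$ would yield the proper nonzero principal filter $\{y\in A\,:\,e\leq y\}$, using $e\odot e=e$ from Proposition~\ref{car-B(A)}). As $z\leftrightarrow 1=z$ and $z\leftrightarrow 0=\neg \, z$ in any residuated lattice, this says that for every $M$ exactly one of $x\in M$ and $\neg \, x\in M$ holds --- not both, since $x\odot \neg \, x=0$ (immediate from residuation: $x\odot \neg \, x\leq 0$ iff $\neg \, x\leq x\rightarrow 0=\neg \, x$) and $M$ is proper. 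Consequently $S_{\rm Max}(x)=\{M\,:\,x\notin M\}$ and $S_{\rm Max}(\neg \, x)=\{M\,:\,x\in M\}$ are complementary, hence both clopen, subsets of ${\rm Max}(A)$. Now I would invoke the known Stone-type duality for MV-algebras, according to which $e\mapsto S_{\rm Max}(e)$ is an isomorphism of $B(A)$ onto the Boolean algebra ${\rm Clopen}({\rm Max}(A))$: applied to the clopen set $S_{\rm Max}(x)$ it produces $b\in B(A)$ with $S_{\rm Max}(b)=S_{\rm Max}(x)$, that is $b\in M\Leftrightarrow x\in M$ for all $M\in {\rm Max}(A)$; then $b\leftrightarrow x\in M$ for every maximal $M$ (both $b/M$ and $x/M$ lie in $\{0,1\}$ and are $1$ simultaneously), so $b\leftrightarrow x\in R$ and $b/R=x/R=\bar e$, completing the proof.

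I expect the substantive obstacle to be precisely that duality, more exactly its surjectivity half: that every clopen $V\subseteq {\rm Max}(A)$ equals $S_{\rm Max}(e)$ for some $e\in B(A)$. Injectivity is easy --- two Boolean elements lying in the same maximal filters are congruent modulo $R$, hence equal because $B(r_A)$ is injective --- but for surjectivity one must, given a clopen $V$, write $V$ and ${\rm Max}(A)\setminus V$ as finite unions of basic opens (using compactness of ${\rm Max}(A)$), reduce each to a single $S_{\rm Max}(a)$ via $S_{\rm Max}(a')\cup S_{\rm Max}(a'')=S_{\rm Max}(a'\odot a'')$, and then extract a complemented element from the resulting pair $a,b$ with $S_{\rm Max}(a),S_{\rm Max}(b)$ complementary. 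Alternatively, the whole surjectivity step --- indeed the whole statement --- is \cite[Proposition 5]{figele}, which may simply be cited.
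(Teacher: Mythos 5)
The first thing to note is that the paper offers no proof of this proposition at all: it is imported verbatim as \cite[Proposition 5]{figele}, so your closing remark --- that the whole statement ``may simply be cited'' --- is exactly what the author does. Everything you actually establish along the way is correct: $Ds(A)=\{1\}$ for an involutive algebra; the reduction to surjectivity of $B(r_{A})$; the fact that a simple residuated lattice has no idempotents other than $0$ and $1$ (your principal-filter argument is fine), hence the image of a Boolean element of $A/{\rm Rad}(A)$ in each $A/M$ is $0$ or $1$; the resulting clopen partition $S_{\rm Max}(x)\,\dot\cup\,S_{\rm Max}(\neg\,x)$ of ${\rm Max}(A)$; and the verification that a Boolean $b$ with $S_{\rm Max}(b)=S_{\rm Max}(x)$ satisfies $b/{\rm Rad}(A)=x/{\rm Rad}(A)$.

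However, as a proof the proposal has a genuine gap exactly where you suspect it: the surjectivity of $e\mapsto S_{\rm Max}(e)$ from $B(A)$ onto the clopen subsets of ${\rm Max}(A)$ is not a neutral ``known duality'' one can wave at --- it is essentially equivalent to the proposition being proved, and your sketch stops precisely before the hard part. Compactness does reduce a clopen pair to a single pair $S_{\rm Max}(a)$, $S_{\rm Max}(b)$ of complementary basic opens, but ``extract a complemented element from the resulting pair'' is the entire content of idempotent lifting. Note that every step you do carry out (the clopen partition, injectivity of $e\mapsto S_{\rm Max}(e)$ on $B(A)$, compactness of ${\rm Max}(A)$) is valid in an arbitrary residuated lattice, yet the paper points to \cite{eu3} for a residuated lattice \emph{without} lifting Boolean center; so whatever completes the argument must use genuinely MV-algebraic structure (in \cite{figele} this is where the specific arithmetic of $\oplus$ and the representation of $A/{\rm Rad}(A)$ enter), and none of that appears in the proposal. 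In short: either cite \cite[Proposition 5]{figele} outright, as the paper does, or supply the MV-specific construction of the idempotent from the pair $a,b$; the intermediate reductions alone do not prove the statement.
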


\begin{corollary}
Any BL-algebra has lifting Boolean center.
\label{bllp}
\end{corollary}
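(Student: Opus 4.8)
The plan is to assemble the corollary from three ingredients already in place: the fact that BL-algebras are Glivenko, the passage to the quotient $A/Ds(A)$, and the lifting property for MV-algebras. First I would recall that every BL-algebra $A$ is Glivenko (as noted in the proof of Corollary \ref{abladamv}, via \cite[page 164]{cito2}), so that the machinery built around the diagram involving $p_A$, $r_A$ and $\phi_A$, and in particular Proposition \ref{glivlp}, is applicable to $A$.

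Next I would invoke Corollary \ref{abladamv} to conclude that $A/Ds(A)$ is an MV-algebra. Then Proposition \ref{mvlp} (the result of \cite[Proposition 5]{figele}) gives that $A/Ds(A)$, being an MV-algebra, has lifting Boolean center. At this point the hypothesis of Proposition \ref{glivlp} is met: $A$ is Glivenko and $A/Ds(A)$ has lifting Boolean center.

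Finally I would apply Proposition \ref{glivlp} directly to transfer the lifting property from $A/Ds(A)$ back up to $A$, which yields the claim. There is no real obstacle here, since all the work has been done in the preceding propositions; the only point that needs to be stated explicitly is that BL-algebras are Glivenko, so that Proposition \ref{glivlp} may be used. One could alternatively argue through Propositions \ref{lpiffsurj} and \ref{lpiffisom} (surjectivity of $B(\phi_A)$), but routing through Proposition \ref{glivlp} is the shortest path and makes the dependence on the Glivenko property transparent.

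\begin{proof}
Let $A$ be a BL-algebra. Then $A$ is Glivenko (see \cite[page 164]{cito2}). By Corollary \ref{abladamv}, $A/Ds(A)$ is an MV-algebra, hence by Proposition \ref{mvlp} it has lifting Boolean center. Since $A$ is Glivenko, Proposition \ref{glivlp} now gives that $A$ has lifting Boolean center.
\end{proof}
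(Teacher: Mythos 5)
Your proof is correct and follows exactly the same route as the paper: BL-algebras are Glivenko, Corollary \ref{abladamv} makes $A/Ds(A)$ an MV-algebra, Proposition \ref{mvlp} gives it lifting Boolean center, and Proposition \ref{glivlp} transfers this to $A$. No differences worth noting.
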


\begin{proof}
Let $A$ be a BL-algebra. Then: $A$ is Glivenko (\cite{cito2}) and, by Corollary \ref{abladamv} and Propositions \ref{mvlp} and \ref{glivlp}, $A/Ds(A)$ is an MV-algebra, so $A/Ds(A)$ has lifting Boolean center, hence $A$ has lifting Boolean center.\end{proof}

\begin{corollary}
Any Glivenko residuated lattice $A$ that satisfies the fo\-l\-lo\-wing equation: for all $a,b\in A$, $(\neg \, a\rightarrow \neg \, b)\rightarrow \neg \, b=(\neg \, b\rightarrow \neg \, a)\rightarrow \neg \, a$, has lifting Boolean center.
\end{corollary}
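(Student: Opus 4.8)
The plan is to mimic the proof of Corollary~\ref{bllp}, simply replacing the appeal to Corollary~\ref{abladamv} (which produced an MV-algebra quotient in the BL case) by the more general Proposition~\ref{ecmv}. Concretely, the argument is a three-link chain: from the hypotheses, deduce that $A/Ds(A)$ is an MV-algebra; from that, deduce that $A/Ds(A)$ has lifting Boolean center; and from that, together with the Glivenko assumption, deduce that $A$ itself has lifting Boolean center.

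In more detail, I would first invoke Proposition~\ref{ecmv}: since $A$ is Glivenko and satisfies $(\neg \, a\rightarrow \neg \, b)\rightarrow \neg \, b=(\neg \, b\rightarrow \neg \, a)\rightarrow \neg \, a$ for all $a,b\in A$, the quotient $A/Ds(A)$ is an MV-algebra. Next, Proposition~\ref{mvlp} asserts that every MV-algebra has lifting Boolean center, so $A/Ds(A)$ has lifting Boolean center. Finally, since $A$ is Glivenko, Proposition~\ref{glivlp} allows the transfer of this property from $A/Ds(A)$ to $A$, yielding that $A$ has lifting Boolean center, which is the claim. (An alternative route avoiding Proposition~\ref{ecmv} would go through Proposition~\ref{regmv}, observing that ${\rm Reg}(A)\cong A/Ds(A)$ by Proposition~\ref{`dreg`} is then an MV-algebra; but it comes to the same thing.)

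I do not expect a genuine obstacle: the three cited propositions compose directly, and the only input they require — that $A$ be Glivenko and satisfy the displayed equation — is exactly what is assumed. The single point worth a moment's care is that the equation in the corollary's statement is, character for character, the equation occurring in Propositions~\ref{regmv} and~\ref{ecmv}, so that Proposition~\ref{ecmv} applies with no translation; this holds. In essence the corollary is Corollary~\ref{bllp} with its hypothesis ``$A$ is a BL-algebra'' weakened to ``$A$ is a Glivenko residuated lattice satisfying that equation'', and the proof has the identical shape.
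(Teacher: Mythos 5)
Your proposal is correct and is exactly the paper's argument: the paper proves this corollary by citing Propositions \ref{ecmv}, \ref{mvlp} and \ref{glivlp} in precisely the chain you describe. No discrepancies to report.
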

\begin{proof}
By Propositions \ref{ecmv}, \ref{mvlp} and \ref{glivlp}.\end{proof}

See \cite{eu3} for an example of a residuated lattice that does not have lifting Boolean center. 

\section{Simple and Quasi-local Residuated Lattices}
\label{locsemiloc}

\hspace*{11pt} In this section we study the classes of local, semilocal, simple and quasi-local residuated lattices, as well as the relations between these classes. Semilocal and quasi-local residuated lattices are more general concepts than local residuated lattices. Quasi-local residuated lattices extend quasi-local MV-algebras and quasi-local BL-algebras (structures that characterize weak Boolean products of local MV-algebras, respectively weak Boolean products of local BL-algebras) (by \cite{dngele-2001}). We conjecture that this result remains valid for a much larger class of residuated lattices; the cha\-rac\-te\-ri\-za\-tion of this class is still an open problem.

\begin{definition}
A residuated lattice is said to be {\em local} iff it has exactly one maximal filter.
\end{definition}

\begin{proposition}
If $A$ is a local residuated lattice, then the unique maximal filter of $A$ is $D(A)=\{a\in A|ord(a)=\infty \}$, so ${\rm Rad}(A)=D(A)$.
\end{proposition}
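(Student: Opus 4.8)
The plan is to identify the unique maximal filter $M$ of $A$ with $D(A)$; once this is done, the equality ${\rm Rad}(A)=D(A)$ is immediate, since ${\rm Rad}(A)$ is by definition the intersection of all maximal filters of $A$ and here there is only one. Note that locality guarantees that $M$ exists. So the proof splits into the two inclusions $M\subseteq D(A)$ and $D(A)\subseteq M$.

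For $M\subseteq D(A)$ I would argue that no element of finite order can belong to a proper filter. Indeed, if $a\in M$ and $ord(a)=n$ for some $n\in{\rm I\!N}^{*}$, then $a^{n}=0$; since $M$ is closed under $\odot$, this gives $0\in M$, forcing $M=A$ and contradicting that $M$ is proper. Hence $ord(a)=\infty$, i.e. $a\in D(A)$. This direction is routine.

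The inclusion $D(A)\subseteq M$ is the substantive part. Given $a\in D(A)$, so $a^{n}\neq 0$ for every $n\in{\rm I\!N}^{*}$, I would form the principal filter $F_{a}=\{b\in A\mid a^{n}\leq b\text{ for some }n\geq 0\}$; using $a^{0}=1$, Lemma \ref{`leqres`} (v) and transitivity of $\leq$ one checks that $F_{a}$ is a filter (the least one containing $a$). The key observation is that $F_{a}$ is proper: $0\in F_{a}$ would mean $a^{n}\leq 0$, i.e. $a^{n}=0$, for some $n$, contradicting $a\in D(A)$. Then I invoke the standard fact that every proper filter of a residuated lattice is contained in a maximal one (the union of a chain of proper filters is again a proper filter, because a filter containing $0$ is the whole algebra, so Zorn's Lemma applies to the proper filters containing $F_{a}$). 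Since $M$ is the only maximal filter, $F_{a}\subseteq M$, hence $a\in M$.

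The only points that require a little care are ensuring the degenerate case is excluded — if $A$ were trivial it would possess no proper filter and hence no maximal filter, so being local already rules this out — and checking that a union of a chain of proper filters remains proper before applying Zorn. The remaining verifications (that $F_{a}$ is a filter, and that $\odot$-closure eliminates finite-order elements) are short.
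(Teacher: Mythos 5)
Your proof is correct and complete. The paper itself gives no argument here: it simply cites an external result (Theorem 4.3 of Ciungu's paper on local pseudo-BCK algebras), so there is nothing in the text to compare against step by step. What you have written is the standard argument that the citation stands in for: the inclusion $M\subseteq D(A)$ via $\odot$-closure and the fact that a filter containing $0$ is all of $A$; the inclusion $D(A)\subseteq M$ via the principal filter $F_{a}$ generated by $a$, its properness when $ord(a)=\infty$, and Zorn's Lemma together with uniqueness of the maximal filter; and finally ${\rm Rad}(A)=M=D(A)$ because the radical is the intersection of a one-element family. Your attention to the degenerate case (the trivial algebra is not local, since it has no proper filters) and to the properness of unions of chains before invoking Zorn is exactly the care the argument needs. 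In short, you supply a self-contained proof where the paper defers to a reference, and the proof is sound.
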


\begin{proof}
This is part of \cite[Theorem 4.3]{ciu}.\end{proof}

It is known that simple residuated lattices are characterized by the following result.

\begin{proposition}
A residuated lattice $A$ is simple iff, for every $a\in A$, \[a\ne 1\Ra ord(a)<\infty.\]
\end{proposition}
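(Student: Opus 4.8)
The plan is to rephrase simplicity in terms of filters and then exploit the description of the principal filter generated by a single element. Recall that a residuated lattice is simple exactly when its only filters are $\{1\}$ and $A$ (equivalently, via the correspondence between filters and congruences, when it has no nontrivial congruences), these being distinct since $A$ is assumed nontrivial. So the assertion to prove is: the only filters of $A$ are the two trivial ones if and only if every $a\in A$ with $a\neq 1$ has finite order. The one extra ingredient I will use is the explicit form of the filter $\langle a\rangle $ generated by $a$, namely
\[\langle a\rangle =\{b\in A\mid a^{n}\leq b\mbox{ for some }n\in {\rm I\!N}^{*}\},\]
which is immediate from the definition of a filter together with $a^{n}\odot a^{m}=a^{n+m}$ and the monotonicity of $\odot $ (Lemma \ref{`leqres`}, (v)).

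For the implication ``simple $\Ra $ finite orders'' I would fix $a\in A$ with $a\neq 1$ and look at $\langle a\rangle $. Since $a\in \langle a\rangle $ and $a\neq 1$, the filter $\langle a\rangle $ differs from $\{1\}$, so simplicity forces $\langle a\rangle =A$; in particular $0\in \langle a\rangle $, i.e. $a^{n}\leq 0$, hence $a^{n}=0$, for some $n\in {\rm I\!N}^{*}$. Therefore $ord(a)\leq n<\infty $.

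For the converse, assuming that every $a\neq 1$ has finite order, I would take an arbitrary filter $F$ of $A$ with $F\neq \{1\}$, pick $a\in F$ with $a\neq 1$, and use the hypothesis to obtain $a^{n}=0$ for $n=ord(a)$. Since $F$ is closed under $\odot $, this yields $0=a^{n}\in F$, whence $F=A$. Thus $\{1\}$ and $A$ are the only filters of $A$, so $A$ is simple.

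I do not anticipate a real obstacle: once simplicity has been read off in terms of filters, each direction reduces to a two-line argument. The only steps worth a word of justification are the passage from ``simple'' to ``exactly two filters'' and the closed form of $\langle a\rangle $, both of which are standard facts about residuated lattices.
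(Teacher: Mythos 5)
Your proof is correct. The paper gives no argument for this proposition (it is cited as a known characterization), and what you write is the standard proof it implicitly relies on: simplicity is read off as ``the only filters are $\{1\}$ and $A$'' via the filter--congruence correspondence, and both directions then follow from the description $\langle a\rangle =\{b\in A\mid a^{n}\leq b\mbox{ for some }n\}$ of the principal filter, exactly as you do. The only point worth flagging is the degenerate one you already note: one must exclude the one-element lattice (where $0=1$), since there the right-hand condition holds vacuously while the algebra is conventionally not simple.
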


\begin{proposition}
Any simple residuated lattice is local.
\end{proposition}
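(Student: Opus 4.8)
The plan is to exploit the characterization of simple residuated lattices recalled just above: in a simple residuated lattice, every element $a\neq 1$ has finite order. First I would record two easy preliminary facts. A simple algebra is, by convention, nontrivial, so $\{1\}\neq A$; and $\{1\}$ is a filter of $A$, since it is a deductive system ($1\in\{1\}$, and if $a,a\rightarrow b\in\{1\}$ then $b=1\rightarrow b=1$ by Lemma~\ref{`leqres`}, (i)). Hence $\{1\}$ is a proper filter of $A$.

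The key step is to show that $\{1\}$ is the only proper filter of $A$. To see this, I would take an arbitrary proper filter $M$ and, assuming $M\neq\{1\}$, pick $a\in M$ with $a\neq 1$. By the characterization, $ord(a)=n$ for some $n\in{\rm I\!N}^{*}$, so $a^{n}=0$; since $M$ is closed under $\odot$ this gives $0=a^{n}\in M$, and because $0\leq b$ for every $b\in A$, condition (ii) in the definition of a filter forces $M=A$ --- contradicting properness. Hence $M=\{1\}$.

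From this, the conclusion is immediate: $\{1\}$ is a proper filter, it is the only one, so it is maximal (nothing lies strictly between it and $A$), and any maximal filter, being proper, must equal $\{1\}$. Thus $A$ has exactly one maximal filter and is therefore local. I do not expect any real obstacle here; the only point deserving a word of care is the nontriviality convention for simple algebras, which is what makes $\{1\}$ a genuine proper --- hence maximal --- filter, so that $A$ indeed \emph{has} a maximal filter. (Alternatively, one could bypass the characterization altogether: by the correspondence between filters and congruences, a simple residuated lattice has exactly the two filters $\{1\}$ and $A$, whence the result.)
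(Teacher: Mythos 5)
Your proof is correct and follows essentially the same route as the paper, whose entire argument is the observation that the only filters of a simple residuated lattice are $\{1\}$ and $A$, so that $\{1\}$ is the unique maximal filter; your parenthetical remark about the filter--congruence correspondence is precisely the paper's justification. Your main text simply fills in the ``immediate'' step explicitly via the order characterization ($a\neq 1$ forces $a^{n}=0\in M$, hence $M=A$), which is a perfectly valid and slightly more self-contained way to reach the same conclusion.
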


\begin{proof}
It is immediate that the only filters of a simple residuated lattice $A$ are $\{1\}$ and $A$, so the only maximal filter is $\{1\}$.\end{proof}

\bprop\label{locally-finite}{\rm \cite[Theorem 1.60, page 35]{pic}} Let $A$ be a residuated lattice and $M$ a proper filter of $A$. Then the following are equivalent:\\(i) $A/M$ is a simple residuated lattice;\\(ii) $M$ is a maximal filter. 
\eprop 

\begin{definition}
A residuated lattice is said to be {\em semilocal} iff it has only a finite number of maximal filters.
\end{definition}

It is obvious that any finite residuated lattice is semilocal and any local residuated lattice is semilocal.


\bprop\label{MaxA-Rad}
For any residuated lattice $A$,
\[|{\rm Max}(A)|=|{\rm Max}(A/{\rm Rad}(A))|.\]
Hence, $A$ is semilocal iff $A/{\rm Rad}(A)$ is semilocal, and $A$ is local iff $A/{\rm Rad}(A)$ is local.
\eprop

\begin{proof}
In a manner similar to the proof of Proposition \ref{`maxrad`}, (v), one can show that there is a bijection between ${\rm Max}(A)$ and ${\rm Max}(A/{\rm Rad}(A))$.\end{proof}

Following the analogous definitions for BL-algebras from \cite{dngele-2001}, we define the quasi-local residuated lattices and the primary and quasi-primary filters of a residuated lattice.

\begin{definition}
A proper filter $F$ of a residuated lattice $A$ is called {\em primary} iff, for all $a,b\in A$, $\neg \, (a\odot b)\in F$ implies that there exists $n\in {\rm I\! N}^{*}$ such that $\neg \, a^{n}\in F$ or $\neg \, b^{n}\in F$.
\end{definition}

\begin{definition}
A residuated lattice $A$ is called {\em quasi-local} iff, for any $a\in A$, there exist $e\in B(A)$ and $n\in {\rm I\! N}^{*}$ such that $a^{n}\odot e=0$ and $(\neg \, a)^{n}\odot (\neg \, e)=0$.
\label{qloc}
\end{definition}

\begin{definition}
A proper filter $F$ of a residuated lattice $A$ is called {\em quasi-primary} iff, for all $a,b\in A$, $\neg \, (a\odot b)\in F$ implies that there exist $u\in A$ and $n\in {\rm I\! N}^{*}$ such that $u\vee \neg \, u\in B(A)$, $\neg \, (a^{n}\odot u)\in F$ and $\neg \, (b^{n}\odot \neg \, u)\in F$.
\end{definition}

The proposition below, in its variant for BL-algebras, is \cite[Proposition 4.10]{dngele-2001}; its proof is also valid for residuated lattices.

\begin{proposition}
Any primary filter of a residuated lattice is quasi-primary.
\end{proposition}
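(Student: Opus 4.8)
The plan is to obtain the quasi-primary property for free from the primary property, by choosing the auxiliary element $u$ among the two ``trivial'' Boolean elements $0$ and $1$, both of which always lie in $B(A)$. So I would start with an arbitrary primary filter $F$ of a residuated lattice $A$ (in particular $F$ is proper) and a pair $a,b\in A$ with $\neg \, (a\odot b)\in F$. By the definition of a primary filter there is an $n\in {\rm I\! N}^{*}$ with $\neg \, a^{n}\in F$ or $\neg \, b^{n}\in F$, and I would treat these two cases separately.

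In the case $\neg \, a^{n}\in F$ I would take $u=1$ and keep this same $n$: then $\neg \, u=\neg \, 1=0$ by Lemma \ref{`leqres`}, (iv), so $u\vee \neg \, u=1\vee 0=1\in B(A)$; moreover $a^{n}\odot u=a^{n}$, so $\neg \, (a^{n}\odot u)=\neg \, a^{n}\in F$, while $b^{n}\odot \neg \, u=b^{n}\odot 0=0$, so $\neg \, (b^{n}\odot \neg \, u)=\neg \, 0=1\in F$ (every filter contains $1$). Symmetrically, in the case $\neg \, b^{n}\in F$ I would take $u=0$, so that $\neg \, u=1$, $u\vee \neg \, u=0\vee 1=1\in B(A)$, $\neg \, (a^{n}\odot u)=\neg \, 0=1\in F$ and $\neg \, (b^{n}\odot \neg \, u)=\neg \, b^{n}\in F$. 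In either case the pair $(u,n)$ witnesses the defining condition of a quasi-primary filter for $a,b$, so $F$ is quasi-primary.

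I do not expect any genuine obstacle: the argument rests only on $0,1\in B(A)$, on $u\vee \neg \, u=1$ for these two choices of $u$, and on the elementary identities $x\odot 1=x$, $x\odot 0=0$, $\neg \, 0=1$. The only point requiring minor care is to check that a single exponent $n$ serves both membership requirements at once; this is automatic here because in each case one of the two required elements reduces to $\neg \, 0=1\in F$. This is precisely the proof given for BL-algebras in \cite[Proposition 4.10]{dngele-2001}, carried over without change.
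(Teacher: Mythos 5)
Your proof is correct and is essentially the argument the paper relies on: the paper simply defers to the BL-algebra proof of \cite[Proposition 4.10]{dngele-2001}, which is exactly this choice of the trivial witness $u=1$ (when $\neg \, a^{n}\in F$) or $u=0$ (when $\neg \, b^{n}\in F$), with the other required membership collapsing to $\neg \, 0=1\in F$. All the facts you invoke ($0,1\in B(A)$, $x\odot 0=0$, $x\odot 1=x$, $\neg \, 0=1$, $\neg \, 1=0$) hold in any residuated lattice, so nothing further is needed.
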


\begin{proposition}{\rm \cite[Corollary 4.4]{ciu}} If $A$ is a local residuated lattice, then: for any $a\in A$, $ord(a)<\infty $ or $ord(\neg \, a)<\infty $.
\label{locord}
\end{proposition}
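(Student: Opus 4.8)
The plan is to prove the statement by contradiction, using the explicit description of the maximal filter of a local residuated lattice that was recorded just above. Assume $A$ is local, and suppose for contradiction that some $a\in A$ satisfies both $ord(a)=\infty$ and $ord(\neg \, a)=\infty$.

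First I would invoke the proposition preceding this one: in a local residuated lattice the unique maximal filter is $D(A)=\{x\in A\mid ord(x)=\infty\}$, and it is of course proper, being a maximal filter. From the assumption we get $a\in D(A)$ and $\neg \, a\in D(A)$, and since $D(A)$ is a filter it is closed under $\odot$, so $a\odot \neg \, a\in D(A)$.

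The key computation, which is elementary, is that $a\odot \neg \, a=0$: indeed $\neg \, a=a\rightarrow 0$, and applying residuation to the trivial inequality $a\rightarrow 0\leq a\rightarrow 0$ yields $(a\rightarrow 0)\odot a\leq 0$, i.e.\ $a\odot \neg \, a=0$. Hence $0\in D(A)$, and by the upward-closure axiom (ii) in the definition of a filter this forces $D(A)=A$, contradicting properness of the maximal filter. Therefore no such $a$ exists, i.e.\ for every $a\in A$ we have $ord(a)<\infty $ or $ord(\neg \, a)<\infty $.

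I do not foresee a genuine obstacle. The only point requiring care is the appeal to the preceding proposition identifying $D(A)$ with the maximal filter, so that in particular $D(A)$ is itself a \emph{filter} (which the raw set $\{x\mid ord(x)=\infty\}$ need not be in an arbitrary residuated lattice) and is proper. As an alternative one could instead pass to the simple quotient $A/{\rm Rad}(A)$ via Proposition \ref{locally-finite} and argue there, but the direct argument above is shorter and self-contained given the results already available.
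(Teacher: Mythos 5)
Your proof is correct. Note that the paper itself gives no argument for this proposition --- it is simply imported by citation from \cite[Corollary 4.4]{ciu} --- so there is no in-paper proof to compare against. Your derivation is a clean, self-contained one: the identification of the unique maximal filter with $D(A)=\{x\in A\mid ord(x)=\infty\}$ from the preceding proposition, the residuation computation $a\odot\neg\,a=0$, and the closure of a filter under $\odot$ together with properness of a maximal filter are exactly the ingredients needed, and each step is justified by material already available in the paper. The only cosmetic remark is that once you have $0\in D(A)$ you could also conclude immediately from $ord(0)=1<\infty$ that $0\notin D(A)$, which is a one-line contradiction not even requiring upward closure; but your route via $D(A)=A$ versus properness is equally valid.
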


\begin{proposition}
Any local residuated lattice is quasi-local.
\end{proposition}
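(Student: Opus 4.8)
The claim is that any local residuated lattice $A$ is quasi-local, i.e.\ for every $a\in A$ there exist $e\in B(A)$ and $n\in {\rm I\!N}^{*}$ with $a^{n}\odot e=0$ and $(\neg\,a)^{n}\odot (\neg\,e)=0$. The natural strategy is to exploit the dichotomy supplied by Proposition~\ref{locord}: since $A$ is local, for any fixed $a\in A$ either $ord(a)<\infty$ or $ord(\neg\,a)<\infty$. The plan is to take $e$ to be one of the two trivial elements $0,1$ of the Boolean center (both lie in $B(A)$), choosing which one according to that dichotomy, and then to verify the two required equalities using only the definition of $ord$ together with the elementary rules from Lemma~\ref{`leqres`}.

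\emph{Case 1: $ord(a)<\infty$.} Pick $n=ord(a)$, so $a^{n}=0$, and set $e=1$, whence $\neg\,e=\neg\,1=0$ by Lemma~\ref{`leqres`}(iv). Then $a^{n}\odot e=0\odot 1=0$, and $(\neg\,a)^{n}\odot(\neg\,e)=(\neg\,a)^{n}\odot 0=0$ since $0$ is absorbing for $\odot$ (the monoid bottom). \emph{Case 2: $ord(\neg\,a)<\infty$.} Symmetrically, pick $n=ord(\neg\,a)$, so $(\neg\,a)^{n}=0$, and set $e=0$, whence $\neg\,e=\neg\,0=1$. Then $a^{n}\odot e=a^{n}\odot 0=0$ and $(\neg\,a)^{n}\odot(\neg\,e)=0\odot 1=0$. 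In either case both defining conditions of Definition~\ref{qloc} hold, so $A$ is quasi-local.

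There is essentially no obstacle here: the content is entirely in Proposition~\ref{locord} (itself cited from \cite{ciu}), and the remainder is the observation that $B(A)$ always contains $0$ and $1$, so the two "degenerate" choices of the Boolean witness suffice when $a$ or $\neg\,a$ has finite order. The only point worth stating carefully is that $0\odot x=0$ for all $x$, which is immediate from $0$ being the least element and Lemma~\ref{`leqres`}(v) (or directly from $(A,\odot,1)$ being a monoid with $0$ absorbing, since $0\le x$ gives $0=0\odot 1\le 0\odot x$... actually more simply $x\odot 0\le x\odot 0$ and $0$ is the bottom, so $x\odot 0=0$ follows from $x\odot 0\le x\odot 1=x$ is not enough — one uses $0\le 0\rightarrow 0$ trivially; cleanest is: by residuation $x\odot 0\le 0\Leftrightarrow x\le 0\rightarrow 0=1$, which holds).

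I would therefore write the proof as a short two-case argument invoking Proposition~\ref{locord}, with the Boolean witness chosen from $\{0,1\}$.
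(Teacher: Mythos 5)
Your proof is correct and is essentially the argument the paper relies on: the paper's proof simply defers to \cite[Proposition 4.9]{dngele-2001}, whose first implication is exactly your two-case argument via the dichotomy of Proposition \ref{locord} with the degenerate Boolean witnesses $e\in\{0,1\}$. Your write-up just makes that cited argument explicit (and your final justification that $x\odot 0=0$ by residuation is the right one).
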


\begin{proof}
Same as the proof of the first implication from \cite[Proposition 4.9]{dngele-2001} (see Proposition \ref{locord}).\end{proof}

\begin{proposition}
If $A$ is a local residuated lattice, then $B(A)=\{0,1\}$.

\end{proposition}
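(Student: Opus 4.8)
The plan is to exploit the fact that every element of the Boolean center is idempotent. By Proposition~\ref{car-B(A)}, if $e\in B(A)$ then $e\odot e=e$, and an immediate induction gives $e^{n}=e$ for every $n\in {\rm I\! N}^{*}$. Consequently $ord(e)$ can only be $1$ (precisely when $e=0$) or $\infty$ (when $e\neq 0$): if some power $e^{n}$ equalled $0$, then $e=e^{n}=0$. Since $B(A)$ is a Boolean subalgebra of $A$ and $\neg \, e=e^{\prime}$ is the complement of $e$, the element $\neg \, e$ also lies in $B(A)$, so the same dichotomy applies to it: either $\neg \, e=0$ or $ord(\neg \, e)=\infty$.

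Next I would invoke Proposition~\ref{locord}: since $A$ is local, for every $a\in A$ we have $ord(a)<\infty$ or $ord(\neg \, a)<\infty$. Applying this with $a=e$ yields two cases. If $ord(e)<\infty$, then by the previous paragraph $e=0$. If $ord(\neg \, e)<\infty$, then $\neg \, e=0$, whence $e=\neg \, \neg \, e=\neg \, 0=1$ (using $e=\neg \, \neg \, e$ from Proposition~\ref{car-B(A)} and $\neg \, 0=1$). In either case $e\in \{0,1\}$, so $B(A)\subseteq \{0,1\}$; the reverse inclusion is clear, and this finishes the proof.

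There is essentially no obstacle here: the whole argument rests on the elementary observation that an idempotent element of finite order must be $0$, combined with the order characterization of locality recorded in Proposition~\ref{locord}. If one prefers not to appeal to Proposition~\ref{locord}, an equivalent route runs through the description of the unique maximal filter: assuming $0\neq e\neq 1$, both $e$ and $\neg \, e$ have infinite order, hence both belong to $D(A)=\{a\in A\mid ord(a)=\infty \}$, which is the unique maximal filter of the local residuated lattice $A$ and in particular is proper; but then $e\odot \neg \, e=0\in D(A)$ (the product being $0$ because $e\odot \neg \, e\leq e\wedge \neg \, e=e\wedge e^{\prime}=0$) forces $D(A)=A$, a contradiction. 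The only points deserving a line of justification in this alternative are that $\neg \, e\in B(A)$ and that $e\odot \neg \, e=0$.
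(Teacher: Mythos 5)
Your proof is correct and is essentially the argument the paper intends: the paper's own proof simply defers to the BL-algebra version in the cited reference while pointing to Proposition~\ref{locord}, and your combination of that proposition with the idempotency $e\odot e=e$ and $e=\neg\,\neg\,e$ from Proposition~\ref{car-B(A)} supplies exactly the missing details.
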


\begin{proof}

Same as the proof of \cite[Proposition 4.4]{dngele-2001} (see Proposition \ref{locord}).\end{proof}

\begin{proposition}
Let $A$ be a quasi-local residuated lattice and $F$ a filter of $A$. Then $A/F$ is quasi-local.
\label{A/Fqloc}
\end{proposition}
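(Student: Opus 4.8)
The plan is to verify the defining condition of quasi-locality (Definition \ref{qloc}) directly in the quotient $A/F$, transporting the witnesses from $A$ along the canonical surjection $q:A\to A/F$, $q(a)=a/F$. First I would recall that $q$ is a surjective morphism of residuated lattices, so it commutes with $\odot$, with $\neg \, $, and with taking powers: $q(a^n)=(a/F)^n$ and $q(\neg \, a)=\neg \, (a/F)$ for all $a\in A$ and $n\in {\rm I\! N}^{*}$. I would also recall that $q$ maps $B(A)$ into $B(A/F)$; indeed this is exactly the functoriality of $B$ noted in Section \ref{prelim1} (the restriction $B(q):B(A)\to B(A/F)$ is a well-defined Boolean morphism), so if $e\in B(A)$ then $e/F\in B(A/F)$ and $(e/F)'=(\neg \, e)/F$.

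Now take an arbitrary element of $A/F$; since $q$ is surjective it has the form $a/F$ for some $a\in A$. Because $A$ is quasi-local, there exist $e\in B(A)$ and $n\in {\rm I\! N}^{*}$ with $a^n\odot e=0$ and $(\neg \, a)^n\odot (\neg \, e)=0$ in $A$. Applying $q$ to these two equalities and using that $q$ preserves $\odot$, powers, $\neg \, $, and the constant $0$, I get $(a/F)^n\odot (e/F)=0/F$ and $(\neg \, (a/F))^n\odot (\neg \, (e/F))=0/F$ in $A/F$. Since $e/F\in B(A/F)$ by the previous paragraph, the pair $(e/F,n)$ witnesses the quasi-locality condition for $a/F$. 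As $a/F$ was arbitrary, $A/F$ is quasi-local.

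I do not expect a serious obstacle here: the statement is essentially the observation that the equational-flavored condition in Definition \ref{qloc} is preserved under surjective homomorphic images, the only subtlety being that the Boolean-center witness must land in $B(A/F)$, which is handled by functoriality of $B$. One small point worth stating explicitly in the write-up is that $0$ of $A/F$ is $0/F$ (so that the equalities transported by $q$ really are the ones required), and that we never need $F\subseteq Ds(A)$ or any hypothesis on $F$ beyond its being a filter — in contrast with Proposition \ref{`d`}, the quasi-locality condition behaves well for every filter precisely because it is phrased purely in terms of operations and the constant $0$, with an existential quantifier over $n$ that survives the quotient.
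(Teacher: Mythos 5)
Your proof is correct and follows essentially the same route as the paper's: push the witnesses of Definition \ref{qloc} through the canonical surjection, the only point needing justification being that $e/F\in B(A/F)$ whenever $e\in B(A)$. The paper establishes that containment via Proposition \ref{B(A)=} (the characterization $e\vee\neg\,e=1$) rather than by citing functoriality of $B$, and leaves the transport of the two equations implicit, but the argument is otherwise the same and yours is merely more explicit.
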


\begin{proof}
Obviously, it is sufficient to prove that $B(A/F)\supseteq B(A)/F$. By Proposition \ref{B(A)=}, $B(A/F)=\{e/F|e\in A,\ e\vee \neg \, e\in F\}\supseteq \{e/F|e\in A,\ e\vee \neg \, e=1\}=\{e/F|e\in B(A)\}=B(A)/F$.\end{proof}

\begin{proposition}
Let $A$ be a residuated lattice and let us consider the following properties:

\noindent (i) $A$ is quasi-local;

\noindent (ii) $A/Ds(A)$ is quasi-local.

Then:

\noindent (I) (i) implies (ii);

\noindent (II) (ii) does not imply (i);

\noindent (III) if $A$ is Glivenko and it satisfies the equation: for all $a,b\in A$, $(\neg \, a\rightarrow \neg \, b)\rightarrow \neg \, b=(\neg \, b\rightarrow \neg \, a)\rightarrow \neg \, a$, then (ii) implies (i).

\end{proposition}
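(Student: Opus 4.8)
For part (I), the plan is to use Proposition~\ref{A/Fqloc} directly: $Ds(A)$ is a filter of $A$, so if $A$ is quasi-local then $A/Ds(A)$ is quasi-local. This is immediate and needs no further work.

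For part (III), the plan is to reduce everything to $B(A)$ via the isomorphism $\theta:A/Ds(A)\to{\rm Reg}(A)$ of Proposition~\ref{`dreg`}. First I would record that, under the hypotheses, ${\rm Reg}(A)$ is an MV-algebra by Proposition~\ref{regmv}, so $A/Ds(A)\cong{\rm Reg}(A)$ is an MV-algebra. Next I would invoke Proposition~\ref{B(A)=B(MV(A))} (noting that a Glivenko residuated lattice satisfying the stated equation has, in particular, the MTL property implicit in the chain of cited results — more carefully, since ${\rm Reg}(A)$ is an MV-algebra one has $B({\rm Reg}(A))=B(A)$ by the same argument as in Proposition~\ref{B(A)=B(MV(A))}, using Propositions~\ref{car-B(A)} and~\ref{`glivnegneg`}) to get $B(A/Ds(A))\cong B({\rm Reg}(A))=B(A)$, with the isomorphism induced by $\neg\,\neg\,$. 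Then, given $a\in A$, I apply (ii): there exist $\bar e\in B(A/Ds(A))$ and $n\in{\rm I\!N}^{*}$ with $(a/Ds(A))^{n}\odot\bar e=0/Ds(A)$ and $(\neg\,(a/Ds(A)))^{n}\odot\bar e^{\prime}=0/Ds(A)$. Pulling $\bar e$ back to $e\in B(A)$ along the isomorphism $B(p_A)$ (so that $e/Ds(A)=\bar e$), the two equations become $a^{n}\odot e\in Ds(A)$ and $(\neg\, a)^{n}\odot\neg\, e\in Ds(A)$, i.e. $\neg\,(a^{n}\odot e)=0$ and $\neg\,((\neg\, a)^{n}\odot\neg\, e)=0$. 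The final step is to upgrade these ``density'' conditions to genuine zero equalities: since $e\in B(A)$, one has $e\odot e=e$ (Proposition~\ref{car-B(A)}), and I would try to show $\neg\,(a^{n}\odot e)=0$ forces $a^{n}\odot e=0$ — for instance by multiplying through by $e$ and by $\neg\, e=e^{\prime}$ and using that $a^{n}\odot e$ lies ``below'' something controlled by the Boolean element $e$, or by passing to ${\rm Reg}(A)$ where $\neg\,\neg\,$ is injective and the element $a^{n}\odot e$ is already regular because $e$ is (so $\neg\,(a^{n}\odot e)=0$ gives $a^{n}\odot e=\neg\,\neg\,(a^{n}\odot e)=\neg\, 0=$ wait — rather $\neg\,\neg\,(a^n\odot e)$, and one checks $a^n\odot e$ is regular so it equals its double negation, which by Glivenko-type computation is $0$).

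The main obstacle I expect is precisely this last upgrade from $\neg\,(a^{n}\odot e)=0$ to $a^{n}\odot e=0$. The cleanest route is to observe that for $e\in B(A)$ and any $x\in A$, the element $x\odot e$ satisfies $\neg\,\neg\,(x\odot e)=\neg\,\neg\, x\odot e$ (a standard rule of calculus for Boolean elements), so $\neg\,(x\odot e)=0$ gives $\neg\,\neg\,(x\odot e)=1$, hence $\neg\,\neg\, x\odot e=1$, which forces $e=1$ and $\neg\,\neg\, x=1$; combined with the symmetric statement for $\neg\, a$ and $e^{\prime}$ one gets a contradiction unless the relevant products vanish outright. Alternatively, and more robustly, I would work entirely inside the MV-algebra ${\rm Reg}(A)$: there, quasi-locality of $A/Ds(A)\cong{\rm Reg}(A)$ is literally the statement that for each regular $a'$ there are $e\in B({\rm Reg}(A))=B(A)$ and $n$ with $(a')^{n}\odot^{*}e=0$ and $(\neg\, a')^{n}\odot^{*}\neg\, e=0$, and since in a Glivenko MTL-algebra $\odot^{*}=\neg\,\neg\,(\cdot\odot\cdot)$ while products with Boolean elements are already regular, these $\odot^{*}$-equations coincide with honest $\odot$-equations; taking $a'=\neg\,\neg\, a$ and using $a^{n}\odot e\le(\neg\,\neg\, a)^{n}\odot e$ together with Lemma~\ref{`leqres`} finishes it. I would present the proof along these lines, citing Propositions~\ref{car-B(A)}, \ref{`dreg`}, \ref{regmv}, \ref{B(A)=B(MV(A))} and Lemma~\ref{`leqres`}, and leaving part (II) to be handled by an explicit counterexample (the MTL-algebra already exhibited in the Example of Section~\ref{denselem}, where $Ds(A)=\{1\}$ so (i) and (ii) coincide is \emph{not} a counterexample — instead one needs a non-quasi-local $A$ with $A/Ds(A)$ quasi-local, which I would build from a suitable residuated lattice whose Boolean center collapses only after quotienting by $Ds(A)$).
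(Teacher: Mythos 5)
There are two genuine problems. First, part (II) is an existence claim: you must exhibit a concrete residuated lattice $A$ that is not quasi-local but for which $A/Ds(A)$ is quasi-local. You correctly observe that the IMTL example with $Ds(A)=\{1\}$ cannot serve, but you then leave the construction entirely to future work (``which I would build from a suitable residuated lattice whose Boolean center collapses only after quotienting by $Ds(A)$''). That is not a proof of (II); the paper supplies an explicit seven-element lattice with $B(A)=\{0,1\}$, $Ds(A)=\{e,1\}$ and $B(A/Ds(A))=A/Ds(A)$, and without some such example this part is simply missing.

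Second, in part (III) your translation of the quotient equations is wrong, and this error generates the ``main obstacle'' you then struggle with. For $x\in A$ one has $x\leftrightarrow 0=(x\rightarrow 0)\wedge(0\rightarrow x)=\neg\,x\wedge 1=\neg\,x$, so $x/Ds(A)=0/Ds(A)$ means $\neg\,x\in Ds(A)$, i.e.\ $\neg\,\neg\,x=0$, which by Lemma \ref{`leqres`}(ii) forces $x=0$ outright. It does \emph{not} mean $x\in Ds(A)$ or $\neg\,x=0$, as you write; you have confused the class of $0$ with the class of $1$. Consequently the ``upgrade from $\neg\,(a^{n}\odot e)=0$ to $a^{n}\odot e=0$'' is a phantom problem, and your first proposed fix (deducing $\neg\,\neg\,x\odot e=1$, hence $e=1$, and hoping for a contradiction) is not a valid argument in any case. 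Your second, ``more robust'' sketch — working inside the MV-algebra ${\rm Reg}(A)\cong A/Ds(A)$, applying quasi-locality there to a regular element, and descending via $a^{n}\odot e\leq(\neg\,\neg\,a)^{n}\odot e$ — is essentially the paper's proof (the paper applies quasi-locality of ${\rm Reg}(A)$ to $\neg\,a$, obtains $(\neg\,a)^{n}\odot e=0$ and $(\neg\,\neg\,a)^{n}\odot\neg\,e=0$, and uses Lemma \ref{`leqres`}(ii),(v) and Proposition \ref{car-B(A)} to verify Definition \ref{qloc} with the Boolean element $\neg\,e$). But as written your account of it is tentative and garbled, and you never cleanly check that the $\odot^{*}$-equations in ${\rm Reg}(A)$ imply the corresponding $\odot$-equations in $A$ (this follows from $x\odot y\leq\neg\,\neg\,(x\odot y)=x\odot^{*}y$ and the fact that $\neg\,\neg\,$ is a monoid morphism for Glivenko $A$). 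Part (I) is fine and coincides with the paper's one-line appeal to Proposition \ref{A/Fqloc}.
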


\begin{proof}
\noindent (I) By Proposition \ref{A/Fqloc}.

\noindent (II) The following example from \cite[Section 16]{ior2} proves this property: $A=\{0,a,b,c,d,e,1\}$, with the residuated lattice structure shown below:

\begin{center}
\begin{picture}(80,115)(0,0)
\put(40,15){\circle*{3}}
\put(38,2){$0$}
\put(20,35){\circle*{3}}
\put(9,33){$a$}
\put(20,55){\circle*{3}}
\put(9,53){$b$}
\put(60,35){\circle*{3}}
\put(65,33){$c$}
\put(60,55){\circle*{3}}
\put(65,53){$d$}
\put(40,75){\circle*{3}}
\put(45,73){$e$}
\put(40,95){\circle*{3}}
\put(38,100){$1$}
\put(40,15){\line(1,1){20}}
\put(40,15){\line(-1,1){20}}
\put(20,35){\line(0,1){20}}
\put(60,35){\line(0,1){20}}
\put(40,75){\line(-1,-1){20}}
\put(40,75){\line(1,-1){20}}
\put(40,75){\line(0,1){20}}
\end{picture}
\end{center}

\begin{center}
\begin{tabular}{cc}
\begin{tabular}{c|ccccccc}
$\rightarrow $ & $0$ & $a$ & $b$ & $c$ & $d$ & $e$ & $1$ \\ \hline
$0$ & $1$ & $1$ & $1$ & $1$ & $1$ & $1$ & $1$ \\
$a$ & $d$ & $1$ & $1$ & $d$ & $d$ & $1$ & $1$ \\
$b$ & $d$ & $e$ & $1$ & $d$ & $d$ & $1$ & $1$ \\
$c$ & $b$ & $b$ & $b$ & $1$ & $1$ & $1$ & $1$ \\
$d$ & $b$ & $b$ & $b$ & $e$ & $1$ & $1$ & $1$ \\
$e$ & $0$ & $b$ & $b$ & $d$ & $d$ & $1$ & $1$ \\
$1$ & $0$ & $a$ & $b$ & $c$ & $d$ & $e$ & $1$
\end{tabular}
& \hspace*{11pt}
\begin{tabular}{c|ccccccc}
$\odot $ & $0$ & $a$ & $b$ & $c$ & $d$ & $e$ & $1$ \\ \hline
$0$ & $0$ & $0$ & $0$ & $0$ & $0$ & $0$ & $0$ \\
$a$ & $0$ & $a$ & $a$ & $0$ & $0$ & $a$ & $a$ \\
$b$ & $0$ & $a$ & $a$ & $0$ & $0$ & $a$ & $b$ \\
$c$ & $0$ & $0$ & $0$ & $c$ & $c$ & $c$ & $c$ \\
$d$ & $0$ & $0$ & $0$ & $c$ & $c$ & $c$ & $d$ \\
$e$ & $0$ & $a$ & $a$ & $c$ & $c$ & $e$ & $e$ \\
$1$ & $0$ & $a$ & $b$ & $c$ & $d$ & $e$ & $1$
\end{tabular}
\end{tabular}
\end{center}

One can see that $B(A)=\{0,1\}$. It is easy to check that $A$ is not quasi-local. Just take $a$ instead of $a$ in Definition \ref{qloc}.

$Ds(A)=\{e,1\}$. As the table of the operation $\leftrightarrow $ shows, $A/Ds(A)=\{0/Ds(A),a/Ds(A)=b/Ds(A),c/Ds(A)=d/Ds(A),e/Ds(A)=1/Ds$\linebreak $(A)\}$. U\-sing Lemma \ref{`leqres`}, (i), we get the following lattice structure for $A/Ds(A)$:

\begin{center}
\begin{picture}(180,75)(0,0)
\put(90,15){\circle*{3}}
\put(75,2){$0/Ds(A)$}
\put(70,35){\circle*{3}}
\put(23,33){$a/Ds(A)$}
\put(110,35){\circle*{3}}
\put(115,33){$c/Ds(A)$}
\put(90,55){\circle*{3}}
\put(75,60){$1/Ds(A)$} 
\put(90,15){\line(1,1){20}}
\put(90,15){\line(-1,1){20}}
\put(90,55){\line(-1,-1){20}}
\put(90,55){\line(1,-1){20}}
\end{picture}
\end{center}

Hence $B(A/Ds(A))=A/Ds(A)$. It is easy to verify that $A/Ds(A)$ is quasi-local.

\noindent (III) Assume that $A/Ds(A)$ is quasi-local and that $A$ is Glivenko and it satisfies the equation: for all $a,b\in A$, $(\neg \, a\rightarrow \neg \, b)\rightarrow \neg \, b=(\neg \, b\rightarrow \neg \, a)\rightarrow \neg \, a$. This last condition is equivalent to the fact that ${\rm Reg}(A)$ is an MV-algebra (see Proposition \ref{regmv}). The fact that $A$ is Glivenko implies that $A/Ds(A)$ and ${\rm Reg}(A)$ are isomorphic residuated lattices (see Proposition \ref{`dreg`}), hence ${\rm Reg}(A)$ is quasi-local. Let $a\in A$. By Lemma \ref{`leqres`}, (iii), $\neg \, a\in {\rm Reg}(A)$, so there exist $e\in B({\rm Reg}(A))$ and $n\in {\rm I\! N}^{*}$ such that $(\neg \, a)^{n}\odot e=0$ and $(\neg \, \neg \, a)^{n}\odot (\neg \, e)=0$. This last equality and Lemma \ref{`leqres`}, (ii) and (v), ensure us that $a^{n}\odot (\neg \, e)=0$. The fact that $(\neg \, a)^{n}\odot e=0$ and Proposition \ref{car-B(A)} show that $(\neg \, a)^{n}\odot (\neg \, \neg \, e)=0$. Since $e\in B(A)$, we have that $\neg \, e=e^{\prime }\in B(A)$. So $A$ is quasi-local.\end{proof}

%
%


\section{Acknowledgements}

\hspace*{11pt} I thank Professor George Georgescu for offering me this research theme and for numerous suggestions concerning the tackling of these mathematical problems.

\noindent Claudia Mure\c{s}an\\University of Bucharest, Faculty of Mathematics and Computer Science, Academiei 14, RO 010014, Bucharest, Romania\\E-mail: c.muresan@yahoo.com
\end{document}